\newcommand{\figout}[1]
{#1}
\newtheorem{lemma}{Lemma}
\newtheorem{theorem}{Theorem}
\newtheorem{corollary}{Corollary}
\newtheorem{remark}{Remark}
\theoremstyle{definition}
\def\a{\alpha}
\def\b{\beta}
\newcommand{\de}{\delta}
\newcommand{\D}{\Delta}
\def\lam{\lambda}
\def\g{\gamma}
\def\eps{\varepsilon}
\def\R{{\mathbb R}}
\def\then{\quad\Rightarrow\quad}
\def\vH{\vec H}
\def\vG{\vec G}
\def\L{\mathcal{L}}
\def\A{\mathcal{A}}
\def\tA{\widetilde{\A}}
\def\tlam{\widetilde{\lam}}
\def\td{\widetilde{d}}
\def\tq{\widetilde{q}}
\def\tit{\widetilde{t}}
\def\wht{\widehat{t}}
\def\hq{\widehat{q}}
\def\sL{sub-Lo\-rent\-zi\-an } 
\def\SL{Sub-Lo\-rent\-zi\-an } 
\newcommand{\Id}{\operatorname{Id}\nolimits}
\newcommand{\artanh}{\operatorname{artanh}\nolimits}
\newcommand{\spann}{\operatorname{span}\nolimits}
\newcommand{\Exp}{\operatorname{Exp}\nolimits}
\newcommand{\Lip}{\operatorname{Lip}\nolimits}
\newcommand{\sgn}{\operatorname{sgn}\nolimits}
\newcommand{\const}{\operatorname{const}\nolimits}
\newcommand{\VEC}{\operatorname{Vec}\nolimits}
\newcommand{\rank}{\operatorname{rank}\nolimits}
\newcommand{\intt}{\operatorname{int}\nolimits}
\newcommand{\cl}{\operatorname{cl}\nolimits}
\newcommand{\eq}[1]{$(\protect\ref{#1})$}
\newcommand{\be}[1]{\begin{equation}\label{#1}}
\newcommand{\ee}{\end{equation}}
\newcommand{\pder}[2]{\frac{\partial \, #1}{\partial \, #2} }
\newcommand{\map}[3]{#1 \, : \, #2 \to #3}
\newcommand{\restr}[2]{\left. #1 \right|_{#2}}
\newcommand{\onefiglabelsizen}[4]
{
\begin{figure}[htbp]
\begin{center}
\includegraphics[height=#4cm]{#1}
\\
\parbox[t]{0.7\textwidth}{\caption{#2}\label{#3}}
\end{center}
\end{figure}
}
\newcommand{\twofiglabelsizeh}[8]
{
\begin{figure}[htbp]
\includegraphics[height=#4cm]{#1}
\hfill
\includegraphics[height=#8cm]{#5}
\\
\parbox[t]{0.4\textwidth}{\caption{#2}\label{#3}}
\hfill
\parbox[t]{0.4\textwidth}{\caption{#6}\label{#7}}
\end{figure}
}
\author{Yu. L. Sachkov, E.F. Sachkova\\
Ailamazyan Program Systems Institute of RAS\\
Pereslavl-Zalessky,	 Russia\\
e-mail: yusachkov@gmail.com}
\title{Sub-Lorentzian distance and spheres \\on the Heisenberg group
\footnote{Sections 1, 2, 6--11 were written by Yu. Sachkov.
Sections 3--5 were written by E.~Sachkova. 
Work by Yu. Sachkov was supported by Russian Scientific Foundation, grant 22-11-00140, https://rscf.ru/project/22-11-00140/.
Work by E. Sachkova was supported by Russian Scientific Foundation, grant 22-21-00877, https://rscf.ru/project/22-21-00877/.
}
}
\begin{document}

\maketitle

\begin{abstract}
The left-invariant \sL problem on the Heisenberg group is considered. An optimal synthesis is constructed, the \sL distance and spheres are described.
\end{abstract}


\tableofcontents


\section{Introduction}
A sub-Riemannian structure on a smooth manifold $M$ is a vector distribution $\Delta \subset TM$ endowed with a Riemannian metric  $g$ (a positive definite  quadratic form). Sub-Riemannian geometry is a rich theory and an active domain of research during the last decades  
\cite{mont, jurd_book,  versh_gersh, notes, ABB, intro, UMN}. 

A \sL structure is a variation of a sub-Riemannian one for which the quadratic form $g$ in a distribution $\Delta$ is a Lorentzian metric (a nondegenerate quadratic form of index 1). \SL geometry tries to develop a theory similar to the sub-Riemannian geometry, and it is still in its childhood. For example, the left-invariant sub-Riemannian structure on the Heisenberg group is a classic subject covered in almost every textbook or survey on sub-Riemannian geometry. On the other hand, the left-invariant \sL structure on the Heisenberg group is not studied in detail. This paper aims to fill this gap.

The paper has the following structure.
In Sec. 2 we recall the basic notions of the \sL geometry. In Sec. 3 we state the left-invariant \sL structure on the Heisenberg group studied in this paper. Results obtained previously for this problem by M. Grochowski are recalled in Sec. 4. In Sec. 5 we apply the Pontryagin maximum principle and compute extremal trajectories; as a consequence, almost all extremal trajectories (timelike ones) are parametrized by the exponential mapping. In Sec. 6 we show that the exponential mapping is a diffeomorphism and find explicitly its inverse. On this basis in Sec. 7 we study optimality of extremal trajectories and construct an optimal synthesis.
In Sec. 8 we describe explicitly the \sL distance, in Sec. 9 we find its symmetries, and in Sec. 10 we study in detail the \sL spheres of positive and zero radii. Finally, in Sec. 11 we discuss the results obtained and pose some questions for further research.

\section{\SL geometry}\label{sec:SL}
A \sL structure on a smooth manifold $M$ is a pair $(\D, g)$ consisting of a vector distribution $\D \subset TM$ and a Lorentzian metric $g$ on $\D$, i.e., a nondegenerate quadratic form $g$ of index 1. \SL geometry attempts to transfer the rich theory of sub-Riemannian geometry (in which the quadratic form $g$ is positive definite) to the case of Lorentzian metric $g$. Research in \sL geometry was started by M. Grochowski \cite{groch2, groch3, groch4, groch6, groch9, groch11}, see also \cite{grong_vas, chang_mar_vas, kor_mar, groch_med_war}. 

Let us recall some basic definitions of \sL geometry.  
A vector $v \in T_qM$, $q \in M$, is called horizontal if $v \in \D_q$. A horizontal vector $v$ is called:
\begin{itemize}
\item
timelike if $g(v)<0$,
\item
spacelike if $g(v)>0$ or $v = 0$,
\item
lightlike if $g(v)=0$ and $v \neq 0$,
\item
nonspacelike if $g(v)\leq 0$.
\end{itemize}
A Lipschitzian curve in $M$ is called timelike if it has timelike velocity vector a.e.; spacelike, lightlike and nonspacelike curves are defined similarly.

A time orientation $X$ is an arbitrary timelike vector field in $M$. A nonspacelike vector $v \in \D_q$ is future directed if $g(v, X(q))<0$, and past directed if $g(v, X(q))>0$. 

A future directed timelike curve $q(t)$, $t \in [0, t_1]$, is called arclength paramet\-ri\-zed if $g(\dot q(t), \dot q(t)) \equiv - 1$. Any future directed timelike curve can be parametrized by arclength, similarly to the arclength parametrization of a horizontal curve in sub-Riemannian geometry.

The length of a nonspacelike curve $\g \in \Lip([0, t_1], M)$ is 
$$
l(\g) = \int_0^{t_1} |g(\dot \g, \dot \g)|^{1/2} dt.
$$

For points $q_1, q_2 \in M$ denote by $\Omega_{q_1q_2}$ the set of all future directed nonspacelike curves in $M$ that connect $q_1$ to $q_2$. In the case $\Omega_{q_1q_2} \neq \emptyset$ denote the \sL distance from the point $q_1$ to the point $q_2$ as
\be{d}
d(q_1, q_2) = \sup \{l(\g) \mid \g \in \Omega_{q_1q_2}\}.
\ee
Notice that in papers \cite{groch4, groch6} in the case $\Omega_{q_1q_2} = \emptyset$ it is set $d(q_1, q_2) = 0$. It seems to us more reasonable not to define $d(q_1, q_2)$ in this case.

A future directed nonspacelike curve $\g$ is called a \sL length maximizer if it realizes the supremum in \eq{d} between its endpoints $\g(0) = q_1$, $\g(t_1) = q_2$.

The causal future of a point $q_0 \in M$ is the set $J^+(q_0)$ of points $q_1 \in M$ for which there exists a future directed nonspacelike curve $\g$ that connects $q_0$ and $q_1$. The chronological future $I^+(q_0)$ of a point $q_0 \in M$ is defined similarly via future directed timelike curves $\g$. 

Let $q_0 \in M$, $q_1 \in J^+(q_0)$. The search for \sL length maximizers that connect $q_0$ with $q_1$ reduces to the search for future directed nonspacelike curves $\g$ that solve the problem
\be{lmax}
l(\g) \to \max, \qquad \g(0) = q_0, \quad \g(t_1) = q_1.
\ee

A set of vector fields $X_1, \dots, X_k \in \VEC(M)$ is an orthonormal frame for a \sL structure $(\D, g)$ if for all $q \in M$
\begin{align*}
&\D_q = \spann(X_1(q), \dots, X_k(q)),\\
&g_q(X_1, X_1) = -1, \qquad g_q(X_i, X_i) = 1, \quad i = 2, \dots, k, \\
&g_q(X_i, X_j) = 0, \quad i \neq j. 
\end{align*}
Assume that time orientation is defined by a timelike vector field $X \in \VEC(M)$ for which $g(X, X_1) < 0$ (e.g., $X = X_1$). Then the   \sL problem for the \sL structure with the orthonormal frame $X_1, \dots, X_k$ is stated as the following optimal control problem:
\begin{align*}
&\dot q = \sum_{i=1}^k u_i X_i(q), \qquad q \in M, \\
&u \in U = \left\{(u_1, \dots, u_k) \in \R^k \mid u_1 \geq \sqrt{ u_2^2 + \dots + u_k^2}\right\},\\
&q(0) = q_0, \qquad q(t_1) = q_1, \\
&l(q(\cdot)) = \int_0^{t_1} \sqrt{u_1^2 - u_2^2 -  \dots - u_k^2} \, dt  \to \max.
\end{align*}

\begin{remark}
The \sL length is preserved under monotone Lipschitzian time reparametrizations $t(s)$, $s \in [0, s_1]$. Thus if $q(t)$, $t \in [0, t_1]$, is a \sL length maximizer, then so is any its reparametrization $q(t(s))$, $s \in [0, s_1]$. 

In this paper we choose primarily the following parametrization of trajectories: the arclength parametrization ($u_1^2 - u_2^2 - \cdots - u_k^2 \equiv 1$) for timelike trajectories, and the parametrization with $u_1(t) \equiv 1$ for future directed lightlike trajectories. Another reasonable choice is to set $u_1(t) \equiv 1$ for all future directed nonspacelike trajectories.
\end{remark}

\section[Statement of the \sL problem on the Heisenberg group]{Statement of the \sL problem \\on the Heisenberg group}  
The Heisenberg group is the space 
  $M \simeq \R^3_{x,y,z}$  with the product rule
$$
(x_1, y_1, z_1) \cdot(x_2, y_2, z_2) = (x_1 + x_2, y_1 + y_2, z_1 + z_2 + (x_1y_2 - x_2 y_1)/2).
$$
It is a three-dimensional nilpotent Lie group with a left-invariant frame
\be{Xi}
X_1 = \pder{}{x} - \frac y2 \pder{}{z}, \qquad 
X_2 = \pder{}{y} + \frac x2 \pder{}{z}, \qquad
X_3 = \pder{}{z},
\ee
with the only nonzero Lie bracket $[X_1, X_2] = X_3$.

Consider the left-invariant \sL structure on the Heisenberg group $M$ defined by the orthonormal frame $(X_1, X_2)$, with the time orientation $X_1$. \SL length maximizers for this \sL structure are solutions to the optimal control problem
\begin{align}
&\dot q = u_1 X_1 + u_2 X_2, \qquad q \in M, \label{prf1} \\
&u \in U = \{(u_1, u_2) \in \R^2 \mid u_1 \geq |u_2|\}, \label{prf2} \\
&q(0) = q_0 = \Id = (0, 0, 0), \quad q(t_1) = q_1, \label{prf3}\\
&l(q(\cdot)) = \int_0^{t_1} \sqrt{u_1^2 - u_2^2} \, dt \to \max. \label{prf4} 
\end{align}

Along with this (full) \sL problem, we will also consider a reduced \sL problem
\begin{align}
&\dot q = u_1 X_1 + u_2 X_2, \qquad q \in M, \label{pr21} \\
&u \in \intt U = \{(u_1, u_2) \in \R^2 \mid u_1 > |u_2|\}, \label{pr22} \\
&q(0) = q_0 = \Id = (0, 0, 0), \quad q(t_1) = q_1, \label{pr23}\\
&l(q(\cdot)) = \int_0^{t_1} \sqrt{u_1^2 - u_2^2} \, dt \to \max. \label{pr24} 
\end{align}
In the full problem \eq{prf1}--\eq{prf4} admissible trajectories $q(\cdot)$ are future directed nonspacelike ones, while in the reduced problem \eq{pr21}--\eq{pr24} admissible trajectories $q(\cdot)$ are only future directed timelike ones.
Passing to arclength-parametrized future directed timelike trajectories, we obtain a time-maximal problem equivalent to the reduced \sL problem \eq{pr21}--\eq{pr24}:
\begin{align}
&\dot q = u_1 X_1 + u_2 X_2, \qquad q \in M, \label{pr31} \\
&u_1^2 - u_2^2 = 1, \qquad u_1 > 0, \label{pr32} \\
&q(0) = q_0 = \Id = (0, 0, 0), \quad q(t_1) = q_1, \label{pr33}\\
&{t_1}  \to \max. \label{pr34} 
\end{align}

\section{Previously obtained results}\label{sec:groch}
The \sL problem on the Heisenberg group \eq{prf1}--\eq{prf4} was studied by M. Grochowski \cite{groch4, groch6}. In this section we present results of these works related to our results.

\begin{itemize}
\item[(1)]
\SL extremal trajectories were parametrized by hyperbolic and linear functions: were obtained formulas equivalent to our formulas \eq{qc=0}, \eq{qcn0}.
\item[(2)]
It was proved that there exists a domain in $M$ containing $q_0 = \Id$ in its boundary at which the \sL distance $d(q_0, q)$ is smooth.
\item[(3)]
The attainable sets of the \sL structure from the point $q_0 = \Id$ were computed: the chronological future of the point $q_0$
$$
I^+(q_0) = \{(x,y,z) \in M \mid -x^2 + y^2 + 4 |z|<0, \ x > 0\},
$$
and the causal future of the point $q_0$
\be{Jq0}
J^+(q_0) = \{(x,y,z) \in M \mid -x^2 + y^2 + 4 |z|\leq 0, \ x \geq  0\}.
\ee
In the standard language of control theory \cite{notes}, $I^+(q_0)$ is the attainable set of  the reduced system \eq{pr21}, \eq{pr22} from the point $q_0$ for arbitrary positive time. Thus the attainable set of the reduced system \eq{pr21}, \eq{pr22} from the point $q_0$ for arbitrary nonnegative time is 
$$
\A = I^+(q_0) \cup \{q_0\}.
$$
The attainable set of the full system \eq{prf1}, \eq{prf2} from the point $q_0$ for arbitrary nonnegative time is 
$$
\cl(\A) = J^+(q_0).
$$
The attainable set $\A$ was also computed in paper \cite{vinberg}, where its boundary  was called the Heisenberg beak. See the set $\partial \A$ in Figs. \ref{fig:beak}, \ref{fig:beak1}, and its views from the $y$- and $z$-axes in 
Figs.~\ref{fig:beaky} and \ref{fig:beakz} respectively.

\figout{
\onefiglabelsizen{Heis_beak3}{The Heisenberg beak $\partial  \A$}{fig:beak}{8}

\twofiglabelsizeh
{Heis_beak_y3}{View of $\partial  \A$ along $y$-axis}{fig:beaky}{6}
{Heis_beak_z3}{View of $\partial  \A$ along $z$-axis}{fig:beakz}{6}
}
\item[(4)]
The lower bound of the \sL distance 
$$
\sqrt{x^2-y^2-4|z|} \leq d(q_0, q), \qquad q = (x, y, z) \in J^+(q_0),
$$
was proved. It was also noted that an upper bound $$d (q_0, q) \leq C \sqrt{x^2-y^2-4|z|} $$  does not hold for any constant $C \in \R$. 
\item[(5)]
It was proved that there exist non-Hamiltonian maximizers, i.e., maximizers that are not projections of the Hamiltonian vector field $\vH$, $H = \frac 12 (h_2^2 - h_1^2)$, related to the problem.
\end{itemize}

\section{Pontryagin maximum principle}\label{sec:PMP}
In this section we compute extremal trajectories of the \sL problem \eq{prf1}--\eq{prf4}.
The majority of results of this section were obtained by M. Grochowski \cite{groch4, groch6} in another notation, we present these results here for further reference.

Denote points of the cotangent bundle $T^*M$ as $\lam$. Introduce linear on fibers of $T^*M$ Hamiltonians $h_i(\lam) = \langle\lam, X_i\rangle$, $i = 1, 2, 3.$ Define the Hamiltonian of the Pontryagin maximum principle (PMP) for the \sL problem \eq{prf1}--\eq{prf4}
$$
h_u^{\nu}(\lam) = u_1 h_1(\lam) + u_2 h_2(\lam) - \nu \sqrt{u_1^2 - u_2^2}, \qquad \lam \in T^*M, \quad u \in U, \quad \nu \in \R.
$$
It follows from PMP \cite{PBGM, notes} that if $u(t)$, $t \in [0, t_1]$, is an optimal control in problem \eq{prf1}--\eq{prf4}, and $q(t)$, $t \in [0, t_1]$, is the corresponding optimal trajectory, then there exists a curve $\lam_{\cdot} \in \Lip([0, t_1], T^*M)$, $\pi(\lam_t) = q(t)$\footnote{where $\map{\pi}{T^*M}{M}$ is the canonical projection, $\pi(\lam) = q$, $\lam \in T^*_qM$}, and a number $\nu \in \{0, -1\}$ for which there hold the conditions for a.e. $t \in [0, t_1]$:
\begin{enumerate}
\item
the Hamiltonian system $\dot\lam_t = \vec{h}_{u(t)}^{\nu}(\lam_t)$\footnote{where $\vec{h}(\lam)$ is the Hamiltonian vector field on $T^*M$ with the Hamiltonian function $h(\lam)$},
\item
the maximality condition $h_{u(t)}^{\nu}(\lam_t) = \max_{v \in U} h_v^{\nu}(\lam_t) \equiv 0$,
\item
the nontriviality condition $(\nu, \lam_t) \neq (0, 0)$.
\end{enumerate}

A curve $\lam_{\cdot}$ that satisfies PMP is called an extremal, and the corresponding control $u(\cdot)$ and trajectory $q(\cdot)$ are called extremal control and trajectory.

\subsection{Abnormal case}

\begin{theorem}\label{th:abn}
In the abnormal case $\nu = 0$ extremals $\lam_t$ and controls $u(t)$ have the following form for some $\tau_1, \tau_2 \geq 0$:
\begin{itemize}
\item[$(1)$]
$h_3(\lam_t) \equiv \const > 0$:
\begin{align*}
t \in (0, \tau_1) &\then &&h_1(\lam_t) = h_2(\lam_t) < 0, \qquad &&u_1(t) = - u_2(t),\\
t \in (\tau_1, \tau_1+\tau_2) &\then &&h_1(\lam_t) = -h_2(\lam_t) < 0, \qquad &&u_1(t) =  u_2(t).
\end{align*}
\item[$(2)$]
$h_3(\lam_t) \equiv \const < 0$:
\begin{align*}
t \in (0, \tau_1) &\then &&h_1(\lam_t) = -h_2(\lam_t) < 0, \qquad &&u_1(t) =  u_2(t),\\
t \in (\tau_1, \tau_1+\tau_2) &\then &&h_1(\lam_t) = h_2(\lam_t) < 0, \qquad &&u_1(t) =  -u_2(t).
\end{align*}
\item[$(3)$]
$h_3(\lam_t) \equiv   0$:
\begin{align*}
&(h_1, h_2)(\lam_t) \equiv \const \neq (0, 0), \qquad h_1(\lam_t) \equiv -|h_2(\lam_t)|, \\
&u(t) \equiv \const, \qquad u_1(t) \equiv \pm u_2(t), \quad \pm = - \sgn(h_1h_2(\lam_t)).
\end{align*}
\end{itemize}
\end{theorem}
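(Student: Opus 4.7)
The plan is to apply the PMP in the abnormal form ($\nu=0$), exploit the simple Heisenberg Poisson structure to reduce the adjoint equation to an ODE on $(h_1,h_2,h_3)$, and then run a case analysis on the conserved quantity $h_3$.

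With $\nu = 0$ the PMP Hamiltonian is the linear function $h_u^0 = u_1 h_1 + u_2 h_2$. Since the terminal time $t_1$ is free in \eq{prf1}--\eq{prf4}, the maximized Hamiltonian vanishes identically along every extremal, so
\[
\max_{v \in U}\bigl(v_1 h_1(\lam_t) + v_2 h_2(\lam_t)\bigr) = 0 \qquad \text{for a.e. } t.
\]
Because $U = \{v_1 \geq |v_2|\}$ is the convex cone with extreme rays $\{v_1 = v_2 \ge 0\}$ and $\{v_1 = -v_2 \ge 0\}$, this linear functional is nonpositive on $U$ with its maximum attained at a nonzero vector if and only if $h_1(\lam_t) \le -|h_2(\lam_t)|$, in which case the maximum is attained precisely on the edge of $U$ orthogonal to $(h_1,h_2)$. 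Thus $\lam_t$ is confined to the closed half-cone $\{h_1 \le -|h_2|\}$ in the fibre coordinates.

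Next I would compute the Hamiltonian system for the fibre coordinates using the Poisson brackets $\{h_1,h_2\}=h_3$, $\{h_1,h_3\}=\{h_2,h_3\}=0$ induced by $[X_1,X_2]=X_3$. The result is
\[
\dot h_1 = -u_2 h_3, \qquad \dot h_2 = u_1 h_3, \qquad \dot h_3 = 0,
\]
so $h_3$ is a first integral and the analysis splits on its sign. For $h_3>0$: on the ray $h_1=h_2<0$ the maximality condition selects $u_1=-u_2\ge 0$, giving $\dot h_1 = \dot h_2 = u_1 h_3 \ge 0$, so $(h_1,h_2)$ travels monotonically along the diagonal toward the apex and reaches it in some time $\tau_1\ge 0$; on the ray $\{h_1=-h_2\le 0,\ h_2\ge 0\}$, maximality selects $u_1=u_2\ge 0$, giving $\dot h_1=-u_1 h_3\le 0$ and $\dot h_2=u_1 h_3\ge 0$, so $(h_1,h_2)$ moves monotonically away from the apex along this second ray for a time $\tau_2\ge 0$. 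Continuity of $\lam_t$ forces the switch between the two rays to occur at the corner $h_1=h_2=0$, and monotonicity rules out further switches; this is assertion (1). The case $h_3<0$ is obtained by the same analysis with the two rays and the directions of motion interchanged, yielding (2). For $h_3=0$ the system freezes $(h_1,h_2)$ at a constant; the nontriviality condition $(\nu,\lam_t)\ne(0,0)$ combined with $h_3=0$ forces $(h_1,h_2)\ne(0,0)$, and then $h_1=-|h_2|$ together with the sign analysis above yields assertion (3) with $u_1\equiv\pm u_2$, $\pm=-\sgn(h_1 h_2)$.

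The main obstacle is a clean treatment of the junction in cases (1)--(2): one must justify that the only way the Lipschitzian curve $\lam_t$ can transit between the two extreme rays of the half-cone is through the apex, and that the free magnitude of $u$ along each phase (a manifestation of the scale invariance inherent in abnormal extremals) is consistent with arbitrary nonnegative values of $\tau_1,\tau_2$. The remaining verifications are direct computation.
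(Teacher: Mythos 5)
Your proposal is correct and takes essentially the same route as the paper, whose entire proof is the one-line instruction to apply the PMP with $\nu=0$: your polar-cone confinement, the bracket ODE $\dot h_1=-u_2h_3$, $\dot h_2=u_1h_3$, $\dot h_3=0$, and the case analysis on $\sgn h_3$ with the switch forced through the apex are exactly the details that instruction leaves implicit. One small imprecision worth fixing: the maximum of $v\mapsto v_1h_1+v_2h_2$ over $U$ is attained at a \emph{nonzero} $v$ iff $h_1=-|h_2|$ (not iff $h_1\le -|h_2|$), and it is this equality --- forced by future-directedness, i.e.\ $u_1>0$ a.e. --- that pins the extremal to the two boundary rays rather than to the full closed cone.
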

\begin{proof}
Apply the PMP for the case $\nu = 0$.
\end{proof}

\begin{corollary}
Along abnormal extremals $H(\lam_t) \equiv 0$, where $H = \frac 12(h_2^2 - h_1^2)$.
\end{corollary}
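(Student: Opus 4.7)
The plan is to derive this as a direct consequence of Theorem~\ref{th:abn}. In each of the three cases listed there, the values of $h_1(\lam_t)$ and $h_2(\lam_t)$ are constrained so that $|h_1(\lam_t)| = |h_2(\lam_t)|$: cases (1) and (2) explicitly assert $h_1 = \pm h_2$, while case (3) states $h_1 = -|h_2|$. Squaring gives $h_1^2(\lam_t) = h_2^2(\lam_t)$ in all three situations, so $H(\lam_t) = \tfrac12(h_2^2 - h_1^2)(\lam_t) \equiv 0$.

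I would also note the alternative derivation straight from PMP, which is actually what underlies Theorem~\ref{th:abn} itself. With $\nu = 0$ the maximized Hamiltonian is the linear function $u_1 h_1(\lam_t) + u_2 h_2(\lam_t)$ on the unbounded cone $U = \{u_1 \geq |u_2|\}$. For the supremum to be finite (and, by PMP, equal to $0$) this linear function must be nonpositive on $U$, which is equivalent to $h_1(\lam_t) \leq -|h_2(\lam_t)|$. The maximum $0$ then coincides with $u_1 h_1 + u_2 h_2$ along the extreme rays of $U$ only when equality $h_1 = -|h_2|$ holds, i.e., $h_1^2 = h_2^2$ and hence $H = 0$.

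There is no real obstacle here; the main point is just to spell out which of the two observations one prefers. A short proof writing simply ``by Theorem~\ref{th:abn}, $h_1^2(\lam_t) = h_2^2(\lam_t)$ in each of the three cases, so $H(\lam_t) \equiv 0$'' suffices.
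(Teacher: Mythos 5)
Your first argument is exactly the paper's (implicit) proof: the corollary is stated as an immediate consequence of Theorem~\ref{th:abn}, in each of whose three cases $h_1^2(\lam_t) = h_2^2(\lam_t)$, so $H(\lam_t) = \tfrac12\bigl(h_2^2 - h_1^2\bigr)(\lam_t) \equiv 0$; the one-line version at the end of your proposal is all that is needed. The alternative PMP derivation is fine as background, but note that it tacitly assumes the extremal control is not identically zero: if $u(t) \equiv 0$ the maximality condition alone only forces $h_1 \leq -|h_2|$, not equality.
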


\subsection{Normal case}
In the normal case ($\nu = -1$)  extremals exist only for $h_1 \leq - |h_2|$.\footnote{The set $\{(h_1, h_2) \in (\R^2)^* \mid h_1 \leq - |h_2|\}$ is the polar set to $U$ in the sense of convex analysis.} In the case $h_1 = - |h_2|$ normal controls and extremal trajectories coincide with the abnormal ones.
And in the domain $\{ \lam \in T^*M \mid h_1 < - |h_2|\}$ extremals are reparametrizations of trajectories of the Hamiltonian vector field $\vH$ with the Hamiltonian $H = \frac 12(h_2^2 - h_1^2)$.
In the arclength parametrization, 
the  extremal controls are  
\be{u_norm}
(u_1, u_2)(t) = (-h_1(\lam_t), h_2(\lam_t)),
\ee 
and the 
  extremals satisfy the Hamiltonian ODE $\dot \lam = \vH(\lam)$ and belong to the level surface $\{H(\lam) = \frac 12\}$, in coordinates:
\begin{align*}
&\dot h_1 = - {h_2h_3}, \qquad \dot h_2 = -  {h_1h_3}, \qquad \dot h_3 = 0, \\
&\dot q = \cosh \psi \, X_1 + \sinh \psi \, X_2, \\
&h_1 = -  \cosh \psi, \qquad h_2 =   \sinh \psi,  \qquad \psi \in \R.
\end{align*}
We denote $c = h_3$ and obtain a parametrization of normal trajectories $q(t) = \pi \circ e^{t \vH}(\lam_0)$, $\lam_0 \in H^{-1}\left(\frac 12\right) \cap T^*_{q_0}M$.
If $c = 0$, then  
\be{qc=0}
x = t \cosh \psi, \quad y = t \sinh \psi, \quad z = 0.
\ee
If $c \neq 0 $, then  
\be{qcn0}
 x = \frac{\sinh(\psi + ct) - \sinh \psi}{c}, 
\quad
y = \frac{\cosh(\psi + ct) - \cosh \psi}{c}, 
\quad
z = \frac{\sinh(ct) - ct}{2c^2}.
\ee 

Summing up, we obtain the following characterization of normal trajectories in the   \sL problem \eq{prf1}--\eq{prf4}.

\begin{theorem}\label{th:norm}
Normal controls and trajectories either coincide with abnormal ones (in the case  $h_1(\lam_t) = - |h_2(\lam_t)|$, see Th. $\ref{th:abn}$), or can be arclength parametrized to get controls \eq{u_norm} and  future directed timelike trajectories \eq{qc=0} if $c = 0$, or \eq{qcn0} if $c \neq 0$.

In particular,
along each normal extremal $H(\lam_t) \equiv \const \in \left\{0, \frac 12 \right\}$.
\end{theorem}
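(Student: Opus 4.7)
The plan is to apply PMP in the normal case $\nu = -1$ and analyze the resulting maximality condition. The PMP Hamiltonian $h_u^{-1}(\lam) = u_1 h_1 + u_2 h_2 + \sqrt{u_1^2 - u_2^2}$ is positively homogeneous of degree $1$ in $u \in U$, so the requirement that its maximum over $U$ equal $0$ forces $(h_1, h_2)$ to lie in the polar cone $\{h_1 \le -|h_2|\}$, as stated in the footnote. I would split the analysis into the boundary case $h_1 = -|h_2|$ and the strict interior case $h_1 < -|h_2|$.

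On the boundary $h_1 = -|h_2|$, the supremum is attained precisely on the ray $u_1 = -\sgn(h_1 h_2)\, u_2$, $u_1 \ge 0$, of $\partial U$. Along such controls $u_1^2 - u_2^2 \equiv 0$, so the term $-\nu \sqrt{u_1^2 - u_2^2}$ vanishes and the Hamiltonian system for $\nu = -1$ coincides with the one for $\nu = 0$. Consequently the corresponding extremals are exactly the abnormal ones of Theorem~\ref{th:abn}, and on them $H = \frac12(h_2^2 - h_1^2) \equiv 0$.

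In the strict case $h_1 < -|h_2|$, the function $u \mapsto h_u^{-1}(\lam)$ is strictly concave along directions transverse to the rays $u_1 = \pm u_2$, so the maximum is attained at an interior critical point. The first order conditions give
$$
u_1 = -\frac{h_1}{\sqrt{h_1^2 - h_2^2}}, \qquad u_2 = \frac{h_2}{\sqrt{h_1^2 - h_2^2}},
$$
and substituting back one verifies that the maximum value is indeed $0$. Reparametrizing by arclength amounts to rescaling time by $\sqrt{h_1^2 - h_2^2}$, which yields the controls \eq{u_norm} and the level constraint $h_1^2 - h_2^2 = 1$, i.e., the extremal lies on $\{H = \frac12\}$. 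Setting $h_1 = -\cosh\psi$, $h_2 = \sinh\psi$ and computing the Poisson brackets from the structure relations $[X_1, X_2] = X_3$, $[X_1, X_3] = [X_2, X_3] = 0$, the Hamiltonian ODE $\dot\lam = \vH(\lam)$ becomes $\dot h_1 = -h_2 h_3$, $\dot h_2 = -h_1 h_3$, $\dot h_3 = 0$. Thus $c := h_3$ is conserved and $\psi(t) = \psi + ct$.

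Finally, substituting the controls $u_1(t) = \cosh(\psi + ct)$, $u_2(t) = \sinh(\psi + ct)$ into the horizontal system $\dot x = u_1$, $\dot y = u_2$, $\dot z = (x u_2 - y u_1)/2$ with initial condition $q(0) = \Id$, a routine quadrature produces \eq{qc=0} for $c = 0$ and \eq{qcn0} for $c \ne 0$. Conservation of $H$ along its own Hamiltonian flow is automatic. The main technical point, in my view, is the careful maximization on the closed cone $U$—correctly distinguishing when the supremum is realized on the boundary rays $u_1 = \pm u_2$ (forcing the abnormal-type dynamics) from when it is realized in the interior—while the ensuing integrations of the Hamiltonian and horizontal systems are elementary.
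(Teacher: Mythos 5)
Your route is the same as the paper's: apply PMP with $\nu=-1$, split according to whether $(h_1,h_2)$ lies on the boundary or in the interior of the polar cone, then integrate the Hamiltonian system to get \eq{qc=0}, \eq{qcn0}; the Poisson-bracket computation and the final quadrature are correct and identical to the paper's. However, the fiberwise maximization over the cone $U$ --- the step you yourself single out as the main technical point --- is wrong in both branches. In the boundary branch, the claim that for $h_1=-|h_2|$ the supremum of $h_u^{-1}=u_1h_1+u_2h_2+\sqrt{u_1^2-u_2^2}$ over $U$ is attained on the ray $u_1=-\sgn(h_1h_2)\,u_2$ is false: take $h_1=-1$, $h_2=1$ and $u=(5,4)\in U$, then $h_u^{-1}=-5+4+3=2>0$, and by positive homogeneity $\sup_U h_u^{-1}=+\infty$. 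Consequently, for $\nu=-1$ the maximality condition $\max_{v\in U}h_v^{-1}(\lam_t)\equiv 0$ can never hold at points of $\{h_1=-|h_2|\}\setminus\{h_1=h_2=0\}$, so this branch is vacuous for normal extremals; the ``either--or'' statement of the theorem survives (and the paper's own text asserts the same coincidence without proof), but your positive derivation of ``normal $=$ abnormal'' dynamics there is not valid.

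In the interior branch the same homogeneity is mishandled. Your point $u=(-h_1,h_2)/\sqrt{h_1^2-h_2^2}$ is a critical point of $h_u^{-1}$ only when $h_1^2-h_2^2=1$, and the value of $h_u^{-1}$ at it is $1-\sqrt{h_1^2-h_2^2}$, which is not $0$ in general --- so ``substituting back one verifies that the maximum value is indeed $0$'' is not a verification. Since $h_u^{-1}$ is positively homogeneous of degree one, $\sup_U h_u^{-1}\in\{0,+\infty\}$, and the supremum is attained at some $u\neq 0$ precisely when $h_1^2-h_2^2=1$, $h_1<0$: for $h_1^2-h_2^2>1$ the maximum $0$ is attained only at $u=0$, and for $h_1^2-h_2^2<1$ the supremum is $+\infty$. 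Thus the level constraint $h_1^2-h_2^2=1$ is forced by the PMP condition that the maximum equal $0$ and be attained at the (nonzero) extremal control; it does not come from arclength reparametrization, as you assert --- reparametrizing time rescales the control $u(t)$ but not the covector $\lam_t$, whose scale is already pinned by the normalization $\nu=-1$, so one cannot ``rescale onto'' the level set. (Incidentally, with the paper's convention $H=\frac12(h_2^2-h_1^2)$, the constraint $h_1^2-h_2^2=1$ gives $H=-\frac12$ rather than $+\frac12$; this sign slip is inherited from the paper.) Once the maximization analysis is redone along these lines, the rest of your argument goes through as in the paper.
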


Consequently, normal trajectories are either nonstrictly normal (i.e., simultaneously normal and abnormal) in the case $H = 0$, or strictly normal (i.e., normal but not abnormal) in the case $H = \frac 12$. Strictly normal arclength-parametrized trajectories are described by the exponential mapping
\begin{align}
&\map{\Exp}{N}{\tA}, \qquad (\lam, t) \mapsto q(t) = \pi \circ e^{t \vec{H}}(\lam), \label{Exp}\\
&N = C \times \R_+,  \qquad \R_+ = (0, + \infty), \qquad C =  T^*_{\Id} M \cap 
H^{-1}\left(\frac 12\right)
 \simeq \R^2_{\psi, c},  \nonumber\\
&\tA = \intt \A  = I^+(q_0) \nonumber
\end{align}
given explicitly by formulas \eq{qc=0}, \eq{qcn0}.

In papers \cite{groch4, groch6} were obtained formulas equivalent to \eq{qc=0}, \eq{qcn0}.

\begin{remark}
Projections of strictly normal (future directed timelike) trajectories to the plane $(x, y)$ are:
\begin{itemize}
\item
 either rays $y = k x$, $x \geq 0$, $k \in (-1, 1)$ (for $c=0$), see Fig. $\ref{fig:xyc0}$,
\item
 or arcs of hyperbolas  with asymptotes $x = \pm y > 0$ (for $c \neq 0$), see Fig. $\ref{fig:xycn0}$.
\end{itemize}

\figout{
\twofiglabelsizeh
{xyc0}{Strictly normal $(x(t), y(t))$, $c = 0$}{fig:xyc0}{7}
{xycn0}{Strictly normal $(x(t), y(t))$, $c \neq 0$}{fig:xycn0}{7}
}

Projections of nonstrictly normal (future directed lightlike) trajectories to the plane $(x, y)$ are broken lines with one or two edges parallel to the rays $x = \pm y > 0$, see Fig. $\ref{fig:xyabn}$.

\figout{
\onefiglabelsizen
{xyabn}{Nonstrictly normal $(x(t), y(t))$}{fig:xyabn}{7}
}

Projections of all extremal trajectories (as well as of all admissible trajectories) to the plane $(x,y)$ are contained in the angle $\{(x, y) \in \R^2 \mid x \geq |y| \}$, which is the projection of the attainable set $J^+(q_0)$ to this plane. 
\end{remark}

\begin{remark}
The  Hamiltonian $H = \frac 12 (h_2^2-h_1^2)$ is preserved on each extremal. On the other hand, since the problem is left-invariant, the extremals respect the symplectic foliation on the dual of the Heisenberg Lie algebra $T^*_{\Id}M = \{(h_1, h_2, h_3)\}$ consisting of $2$-dimensional symplectic leaves $\{h_3 = \const \neq 0\}$ and $0$-dimensional leaves $\{ h_3 = 0, \ (h_1, h_2) = \const\}$. Thus projections of extremals to $T^*_{\Id}M = \{(h_1, h_2, h_3)\}$ belong to intersections of the level surfaces $\left\{H = \const \in \left\{0, \frac 12\right\}\right\}$ with the symplectic leaves:
\begin{itemize}
\item
branches of hyperbolas $h_1^2-h_2^2 = 1$, $h_1 < 0$, $h_3 \neq 0$,
\item
points $(h_1, h_2) = \const$,  $H \in \left\{0, \frac 12\right\}$, $h_1 \leq - |h_2|$, $h_3 = 0$,
\item
angles $h_1 = - |h_2|$,  $h_3 \neq 0$.
\end{itemize}
See Figs. $\ref{fig:h123norm}$, $\ref{fig:h123abnorm}$.
\end{remark}

\figout{
\twofiglabelsizeh
{h123norm}{Strictly normal $(h_1(t), h_2(t), h_3(t))$}{fig:h123norm}{6}
{h123abnorm}{Nonstrictly normal $(h_1(t), h_2(t), h_3(t))$}{fig:h123abnorm}{6}
}

\begin{remark}
In the sense of work {\em \cite{groch4}},
strictly normal extremal trajectories $q(t) = \pi\circ e^{t \vH}(\lam)$, $\lam \in C$, are Hamiltonian since they are projections of trajectories of the Hamiltonian vector field $\vH$. 

On the other hand, nonstrictly normal extremal trajectories given by items $(1)$, $(2)$ of Th. {\em\ref{th:abn}}  are non-Hamiltonian, e.g., the broken curves
\be{broken+-}
\begin{cases}
e^{t(X_1+X_2)}, & t \in [0, \tau_1],\\
e^{(t - \tau_1)(X_1-X_2)} \circ e^{\tau_1(X_1+X_2)}, & t \in [\tau_1, \tau_2],
\end{cases}
\ee 
and 
\be{broken-+}
\begin{cases}
e^{t(X_1-X_2)}, & t \in [0, \tau_1],\\
e^{(t - \tau_1)(X_1+X_2)} \circ e^{\tau_1(X_1-X_2)}, & t \in [\tau_1, \tau_2],
\end{cases}
\ee
for $0 < \tau_1 < \tau_2$. See item $(5)$ in Sec. {\em\ref{sec:groch}}. Although, each smooth arc of the broken trajectories \eq{broken+-}, \eq{broken-+} is a  reparametrization of projection of a trajectory of the Hamiltonian vector field $\vH$ contained in a face of the angle $\{(h_1, h_2, h_3) \in T_{\Id}^* M \mid h_1 = - |h_2|\}$, see Fig. $\ref{fig:h123abnorm}$.
\end{remark}

\section{Inversion of the exponential mapping}
 
\begin{theorem}\label{th:Exp-1}
The exponential mapping $\map{\Exp}{N}{\tA}$ is a real-analytic diffeomorphism. The inverse mapping   $\map{\Exp^{-1}}{\tA}{N}$, $(x, y, z) \mapsto (\psi, c, t)$, is given by the following formulas:
\begin{align}
&z = 0 \then \psi = \artanh \frac yx, \quad c = 0, \quad t = \sqrt{x^2-y^2}, \label{invz0}\\
&z \neq 0 \then \psi = \artanh \frac yx - p, \quad c = (\sgn z) \sqrt{\frac{\sinh 2p - 2p}{2z}},   \quad t = \frac{2p}{c},
\label{invzn0}
\end{align}
where $p = \b\left(\frac{z}{x^2-y^2}\right)$, and $\map{\b}{\left(-\frac 14, \frac 14\right)}{\R}$  is the inverse function to   the diffeomorphism
$$
\map{\a}{\R}{\left(-\frac 14, \frac 14\right)}, \qquad 
\a(p) = \frac{\sinh 2p-2p}{8 \sinh^2 p}.
$$
\end{theorem}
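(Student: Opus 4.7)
The plan is to construct the inverse explicitly and verify it by direct substitution, treating the two cases $c = 0$ and $c \neq 0$ separately. First I would observe from \eq{qc=0} that $z \equiv 0$ whenever $c = 0$, and conversely, for $c \neq 0$ and $t > 0$, the formula $z = (\sinh ct - ct)/(2c^2)$ in \eq{qcn0} is nonzero because $s \mapsto \sinh s - s$ has $s = 0$ as its unique zero. Hence the slice $z = 0$ in $\tA$ corresponds exactly to $c = 0$, and in this subcase the inversion \eq{invz0} is immediate from $y/x = \tanh \psi$ and $x^2 - y^2 = t^2$ (note $x > 0$ and $t > 0$).

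For $c \neq 0$ I would introduce $p = ct/2$ and apply sum-to-product identities to \eq{qcn0} to rewrite
\begin{equation*}
x = \frac{2\sinh p \cdot \cosh(\psi + p)}{c}, \qquad y = \frac{2\sinh p \cdot \sinh(\psi + p)}{c}, \qquad z = \frac{\sinh 2p - 2p}{2c^2}.
\end{equation*}
These yield $x^2 - y^2 = 4\sinh^2 p / c^2 > 0$, $\tanh(\psi + p) = y/x$, and the key identity
\begin{equation*}
\frac{z}{x^2 - y^2} \;=\; \frac{\sinh 2p - 2p}{8 \sinh^2 p} \;=\; \a(p).
\end{equation*}
The sign relation $\sgn p = \sgn z$ (forced by $\sgn(\sinh 2p - 2p) = \sgn p$) together with the requirement $t = 2p/c > 0$ pins down $\sgn c = \sgn z$, which then determines $c$ by the displayed formula for $z$, after which $\psi$ is recovered from $\tanh(\psi + p) = y/x$.

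The central obstacle is therefore to show that $\a : \R \to (-1/4, 1/4)$ is a real-analytic diffeomorphism, which makes $\b$ well-defined. I would verify this by noting that $\a$ is odd, extends analytically through $p = 0$ with $\a(p) = p/6 + O(p^3)$, tends to $\pm 1/4$ as $p \to \pm \infty$, and has derivative
\begin{equation*}
\a'(p) \;=\; \frac{p \cosh p - \sinh p}{2 \sinh^3 p},
\end{equation*}
which is strictly positive on all of $\R$; the requisite inequality $p \cosh p > \sinh p$ for $p > 0$ follows because $f(p) = p \cosh p - \sinh p$ satisfies $f(0) = 0$ and $f'(p) = p \sinh p > 0$, and the sign on $p < 0$ is handled by oddness. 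Once $\a$ is a diffeomorphism with real-analytic inverse $\b$, the description \eq{Jq0} of $\tA$ gives $|z|/(x^2 - y^2) < 1/4$ on $\tA$, so $\b(z/(x^2 - y^2))$ is always defined, and checking that \eq{invz0}--\eq{invzn0} composes with $\Exp$ to the identity in both directions reduces to the algebraic identities collected above. Real-analyticity of $\Exp^{-1}$ then follows from that of $\b$, $\artanh$, and $\sqrt{\cdot}$ (applied on strictly positive arguments).
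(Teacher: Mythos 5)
Your proposal is correct and follows essentially the same route as the paper's proof: the same sum-to-product rewriting of \eq{qcn0} via $p = ct/2$, the same reduction of the inversion to the scalar function $\a$, and the same explicit inverse formulas, concluding by two-sided verification. The only difference is that you actually prove $\map{\a}{\R}{\left(-\frac 14, \frac 14\right)}$ is a real-analytic diffeomorphism (via $\a'(p) = \frac{p\cosh p - \sinh p}{2\sinh^3 p} > 0$, oddness, and the limits $\pm\frac 14$), together with the sign bookkeeping $\sgn p = \sgn z = \sgn c$ and the dichotomy $z = 0 \iff c = 0$; the paper asserts these points without detail, so your write-up is, if anything, more complete.
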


See plots of the functions $\a(p)$ and $\b(z)$ in Figs. \ref{fig:alpha} and \ref{fig:beta} respectively.
\figout{
\twofiglabelsizeh
{alpha}{Plot of $\a(p)$}{fig:alpha}{5}
{beta}{Plot of $\b(z)$}{fig:beta}{8}
}

\begin{proof}
The exponential mapping is real-analytic since the strictly normal extremals are trajectories of the real-analytic Hamiltonian vector field $\vH$. We show that $\Exp$ is bijective.

Formulas \eq{invz0} follow immediately from \eq{qc=0}.

Let $c \neq 0$. Then formulas \eq{qcn0} yield
\begin{align}
&x = \frac 2c \sinh p \cosh \tau, \quad
y = \frac 2c \sinh p \sinh \tau, \quad
z = \frac{1}{2c^2}(\sinh 2p - 2 p), \label{xyznew}\\
&p = \frac{ct}{2}, \qquad \tau = \psi + \frac{ct}{2}. \label{ptau}
\end{align}
Thus
\begin{align}
&x^2 - y^2 = \frac{4}{c^2} \sinh^2 p, \label{x2-y2}\\
&\frac{z}{x^2-y^2} = \frac{\sinh 2p - 2p}{8 \sinh^2 p} = \a(p). \nonumber
\end{align}
The function $\a(p)$ is a diffeomorphism from $\R$ to $\left(-\frac 14, \frac 14\right)$, thus it has an inverse function, a diffeomorphism  $\map{\b}{\left(-\frac 14, \frac 14\right)}{\R}$. So $p = \b(\frac{z}{x^2-y^2})$. Now formulas \eq{invzn0} follow from \eq{xyznew}, \eq{ptau}.

So $\Exp$ is a smooth bijection with a smooth inverse, i.e., a diffeomorphism.
\end{proof}

 \section{Optimality of extremal trajectories}
We study optimality of extremal trajectories. The main tool is a 
sufficient optimality condition (Th. \ref{th:suf_opt}) based on a field of extremals (see \cite{notes}, Sec. 17.1).

We prove optimality of all extremal trajectories (Theorems \ref{th:distIf},  \ref{th:optJ}) without apriori theorem on existence of optimal trajectories. Such a theorem was recently proved \cite{lok_pod}, and it can shorten the proof of optimality in our work. 

\subsection{Sufficient optimality condition}
Let $M$ be a smooth manifold, then the cotangent bundle $T^*M$ bears the Liouville 1-form $s = p dq \in \Lambda^1(T^*M)$ and the symplectic 2-form $\sigma = ds = dp \wedge dq \in \Lambda^2(T^*M)$. 
A submanifold $\L \subset T^*M$ is called a Lagrangian manifold if $\dim \L = \dim M$ and   $\restr{\sigma}{\L} = 0$.

Consider an optimal control problem
\begin{align*}
&\dot q = f(q, u), \qquad q \in M, \quad u \in U,\\
&q(t_0) = q_0, \qquad q(t_1) = q_1, \\
&J[q(\cdot)] = \int_{t_0}^{t_1} \varphi(q, u) \, dt \to \min, \\
&t_0 \text{ is fixed}, \qquad t_1 \text{ is free}.
\end{align*}
Let $g_u(\lam) = \langle \lam, f(q, u)\rangle - \varphi(q, u)$, $\lam \in T^*M$, $q = \pi(\lam)$, $u \in U$, be the normal Hamiltonian of PMP. Suppose that the maximized normal  Hamiltonian $G(\lam) = \max_{u \in U} g_u(\lam)$ is smooth in an open domain $O \subset T^*M$, and let the Hamiltonian vector field $\vG \in \VEC(O)$ be complete. 

\begin{theorem}\label{th:suf_opt}
Let $\L \subset G^{-1}(0) \cap  O$ be a Lagrangian submanifold such that the form $\restr{s}{\L}$ is exact. Let the projection $\map{\pi}{\L}{\pi(\L)}$ be a diffeomorphism on a domain in $M$. Consider an extremal $\tlam_t = e^{t \vG}(\lam_0)$, $t \in [t_0, t_1]$, contained in $\L$, and the corresponding extremal trajectory $ \tq(t) = \pi(\tlam_t)$. Consider also any trajectory $q(t) \in \pi(\L)$, $t \in [t_0, \tau]$, such that $q(t_0) = \tq(t_0)$, $q(\tau) = \tq(t_1)$. Then $J[\tq(\cdot)] < J[q(\cdot)]$.  
\end{theorem}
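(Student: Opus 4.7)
The plan is to run the classical field-of-extremals argument adapted to the free-terminal-time setting, i.e., to construct a scalar potential on $\pi(\L)$ from the Liouville form and compare its increments along the two trajectories.

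First I would use exactness of $\restr{s}{\L}$ to pick a function $a\in C^\infty(\L)$ with $da=\restr{s}{\L}$, set $\xi=(\restr{\pi}{\L})^{-1}:\pi(\L)\to\L$, and define $A=a\circ\xi$ on $\pi(\L)$. Because $\pi\circ\xi=\Id$, for any $v\in T_q\pi(\L)$ one has
$$
dA(v)=(\xi^*s)(v)=\langle\xi(q),\pi_*\xi_*v\rangle=\langle\xi(q),v\rangle.
$$
This is the one-form ``pick up $\lambda$ from the Lagrangian lift and pair it with $v$''.

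Second, for an arbitrary admissible trajectory $q(t)\in\pi(\L)$, $t\in[t_0,\tau]$, with control $u(t)$, I would lift it tautologically to $\lam(t):=\xi(q(t))\in\L$. Since $\L\subset G^{-1}(0)$ and $G(\lam)\ge g_u(\lam)=\langle\lam,f(q,u)\rangle-\varphi(q,u)$, a.e.\ on $[t_0,\tau]$ one obtains the Weierstrass-type estimate $\varphi(q(t),u(t))\ge\langle\lam(t),\dot q(t)\rangle$. Integrating and using the previous computation of $dA$,
$$
J[q(\cdot)]\ \ge\ \int_{t_0}^{\tau}\langle\lam(t),\dot q(t)\rangle\,dt\ =\ \int_{t_0}^{\tau}dA(\dot q)\,dt\ =\ A(q(\tau))-A(q(t_0)).
$$
Note that the lift $\lam(t)$ is \emph{not} an extremal in general; the argument nevertheless goes through because it uses only the bound $g_u\le G\equiv 0$ on $\L$, not the Hamiltonian flow.

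Third, for the reference extremal $\tq(t)=\pi(\tlam_t)$ with $\tlam_t=e^{t\vG}(\lam_0)\in\L$, the maximality condition of PMP together with $\restr{G}{\L}\equiv 0$ turns the inequality into an equality pointwise, so
$$
J[\tq(\cdot)]\ =\ \int_{t_0}^{t_1}\langle\tlam_t,\dot{\tq}(t)\rangle\,dt\ =\ A(\tq(t_1))-A(\tq(t_0)).
$$
Combining this with the boundary conditions $q(t_0)=\tq(t_0)$, $q(\tau)=\tq(t_1)$ gives $J[q(\cdot)]\ge J[\tq(\cdot)]$.

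The main delicate point is the strictness $J[\tq(\cdot)]<J[q(\cdot)]$. Equality in the Weierstrass estimate would force $g_{u(t)}(\lam(t))=G(\lam(t))$ for a.e.\ $t$, i.e., $u(t)$ achieves the maximum of $g_v$ at $\lam(t)=\xi(q(t))$. Assuming that maximizer is unique (the generic situation, and easy to verify in the \sL Heisenberg setting where this theorem will be applied), the trajectory $q(\cdot)$ would then solve the same feedback ODE $\dot q=f(q,u^*(\xi(q)))$ as $\tq$ with the same initial value, so by uniqueness $q\equiv\tq$ on their common domain and $\tau=t_1$. Thus the strict inequality holds as soon as $q(\cdot)$ is not a reparametrization of $\tq(\cdot)$, which is the content implicit in the hypothesis that the two trajectories differ. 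I would flag this point as the only nonobvious one, since exactness of $\restr{s}{\L}$, the diffeomorphism property of $\restr{\pi}{\L}$, and $\restr{G}{\L}\equiv 0$ enter purely as bookkeeping in the preceding chain of equalities and inequalities.
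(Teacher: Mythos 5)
Your route is the same as the paper's: the paper proves this theorem by a one-line reference to the field-of-extremals argument of Th.~17.2 in \cite{notes}, and your first three steps --- the potential $A=a\circ\xi$ with $dA=\xi^*s$, the pointwise Weierstrass estimate $\varphi(q,u)\ge\langle\xi(q),\dot q\rangle$ coming from $g_u\le G\equiv 0$ on $\L$, and equality along the reference extremal --- reproduce exactly that argument. Up to and including the conclusion $J[\tq(\cdot)]\le J[q(\cdot)]$ your proof is complete and correct, and your remark that the lift of the competing trajectory need not be an extremal is the right observation.

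The strictness step, however, has a genuine gap, and you flagged the right spot. First, you prove strictness only under an extra hypothesis (uniqueness of the maximizer of $v\mapsto g_v(\lam)$), which is not among the assumptions of the theorem: the theorem is stated for an arbitrary problem whose maximized Hamiltonian $G$ is smooth on $O$, so as written you prove a weaker statement than the one quoted. Second, even granting uniqueness, the appeal to uniqueness of solutions of the feedback ODE $\dot q=f(q,u^*(\xi(q)))$ is unjustified: uniqueness of the pointwise maximizer does not make $u^*\circ\xi$ Lipschitz, and mere continuity of a right-hand side does not give ODE uniqueness. Both defects are cured by one observation: smoothness of $G$ already determines the velocity at any maximizer. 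Indeed, if $g_u(\lam)=G(\lam)$, then the smooth function $g_u-G\le 0$ attains its maximum at $\lam$, so $dg_u(\lam)=dG(\lam)$, hence $f(q,u)=\pi_*\vec g_u(\lam)=\pi_*\vG(\lam)$ --- the same vector for \emph{every} maximizer, with no uniqueness assumption. Thus equality in your estimate forces $\dot q(t)=\pi_*\vG(\xi(q(t)))$ a.e.; since $\L\subset G^{-1}(0)$ is Lagrangian, $\vG$ is tangent to $\L$ (it is $\sigma$-orthogonal to $T\L$, and the skew-orthogonal complement of $T_\lam\L$ is $T_\lam\L$ itself), so $q\mapsto\pi_*\vG(\xi(q))$ is a \emph{smooth} vector field on $\pi(\L)$ whose integral curve through $\tq(t_0)$ is $\tq$ itself; the a.e.\ equation upgrades to a genuine $C^1$ ODE because its right-hand side is continuous along $q$, and Picard uniqueness gives $q\equiv\tq$. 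Hence equality occurs only when the competing trajectory coincides with the reference one, which is the content of the strict inequality; this is how the cited proof obtains strictness with no assumption beyond smoothness of $G$.
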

\begin{proof}
Completely similarly to the proof of Th. 17.2 \cite{notes}.
\end{proof}

\subsection[Optimality in the reduced \sL problem]{Optimality in the reduced \sL problem \\on the Heisenberg group}
We apply Th. \ref{th:suf_opt} to the reduced \sL problem \eq{pr31}--\eq{pr34}.
For this problem
the maximized Hamiltonian $G = 1 - \sqrt{h_1^2 - h^2_2}$ is smooth on the domain $O = \{ \lam \in T^*M \mid h_1 < - |h_2| \}$, and the Hamiltonian vector field $\vG \in \VEC(O)$ is complete. In the domain $O$ the Hamiltonian vector fields $\vG$ and $\vH$ have the same trajectories up to a monotone time reparametrization; moreover, on the level surface $\left\{H = \frac 12\right\} = \{G = 0\}$ they just coincide between themselves. 

Define the set
\be{L}
\L = \left\{ e^{t \vG}(\lam_0) \mid \lam_0 \in C, \ t > 0\right\}.
\ee

\begin{lemma}
$\L \subset T^*M $ is a Lagrangian manifold such that $\restr{s}{\L}$ is exact.
\end{lemma}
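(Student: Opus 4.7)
The plan is to parametrize $\L$ explicitly by the analytic map $F \colon N \to T^*M$, $F(\lam_0, t) = e^{t\vG}(\lam_0)$, then deduce the manifold structure from Theorem \ref{th:Exp-1}, verify the Lagrangian property by exploiting invariance of $\sigma$ under the Hamiltonian flow, and finally obtain exactness of $\restr{s}{\L}$ from contractibility of $N$.

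First I would observe that on $C$ we have $H = 1/2$, hence $G = 0$, and $\vG = \vH$ on the level set $\{G = 0\}$. Therefore $\pi \circ F = \Exp$, which is a real-analytic diffeomorphism $N \to \tA$ by Theorem \ref{th:Exp-1}. This forces $F$ to be injective, and since $d\Exp = d\pi \circ dF$ is a linear isomorphism at every point, $dF$ must be injective too; hence $F$ is an injective analytic immersion, and $\L = F(N)$ inherits the structure of a $3$-dimensional analytic submanifold of $T^*M$. In particular $\dim \L = \dim M$, which is one of the two requirements for the Lagrangian property.

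Next I would verify $\restr{\sigma}{\L} = 0$ by computing $F^*\sigma$ on the coordinate fields $\partial_\psi, \partial_c, \partial_t$ of $N = C \times \R_+$. Since $dF(\partial_t) = \vG$ and $\iota_{\vG}\sigma = -dG$, the pairings $F^*\sigma(\partial_t, \partial_\psi)$ and $F^*\sigma(\partial_t, \partial_c)$ reduce to derivatives of $G$ along $\partial_\psi$ and $\partial_c$; but $G \equiv 0$ on $\L$, because $C \subset G^{-1}(0)$ and the flow of $\vG$ preserves $G$, so both vanish. For the remaining pairing $F^*\sigma(\partial_\psi, \partial_c)$, at $t = 0$ the pushforwards $dF(\partial_\psi), dF(\partial_c)$ are tangent to $C \subset T^*_{\Id}M$, and the fibre $T^*_{\Id}M$ is a Lagrangian submanifold of $T^*M$, so $\sigma$ restricted to any two of its tangent vectors is zero. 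Invariance of $\sigma$ under $e^{t\vG}$ then propagates this vanishing to all $t > 0$, completing the check that $\L$ is Lagrangian.

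Exactness of $\restr{s}{\L}$ is then automatic: since $\L$ is Lagrangian, $d(\restr{s}{\L}) = \restr{\sigma}{\L} = 0$, so $\restr{s}{\L}$ is closed; and $\L$ is analytically diffeomorphic to $N \cong \R^2 \times (0, +\infty)$, which is contractible, so the Poincar\'e lemma yields a global primitive. I expect the main obstacle to be the flow-transport argument establishing $\restr{\sigma}{\L} = 0$, where three separate bilinear pairings must be tracked; but each reduces to a one-line calculation once the Hamiltonian-flow invariance of $\sigma$ and the isotropy of the fibre $T^*_{\Id}M$ are invoked.
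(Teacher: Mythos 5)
Your proposal is correct and follows essentially the same route as the paper: parametrize $\L$ by the flow map $(\lam_0,t)\mapsto e^{t\vG}(\lam_0)$, verify isotropy using flow-invariance of $\sigma$ together with the vanishing of $dG$ on $T_{\lam_0}C$, and deduce exactness from closedness plus topological triviality via the Poincar\'e lemma. The only differences are minor and in your favor: you obtain the immersion property abstractly from the fact that $d\pi\circ dF=d\Exp$ is an isomorphism (where the paper instead computes $\rank d\Phi=3$ explicitly), and you invoke explicitly the fiber-isotropy fact $\restr{\sigma}{T^*_{\Id}M}=0$ needed to kill the pairing $\sigma(w_1,w_2)$ of two fiber-tangent vectors, a term the paper's displayed computation drops without comment.
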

\begin{proof}
Consider a smooth mapping
$$
\map{\Phi}{(T^*_{\Id}M \cap G^{-1}(0)) \times \R_+}{T^*M}, \qquad (\lam_0, t) \mapsto e^{t\vG}(\lam_0).
$$
Since
\begin{align*}
\rank \left( \pder{\Phi}{(t, \lam_0)}\right) 
&= \rank \left( \vG(\lam), e^{t \vG}_*\left(h_2 \pder{}{h_1} + h_1 \pder{}{h_2}\right),e^{t \vG}_*  \pder{}{h_3}\right) \\
&= \rank \left( \vG(\lam_0),  h_2 \pder{}{h_1} + h_1 \pder{}{h_2},  \pder{}{h_3}\right)\\
&= \rank \left( -h_1 X_1 + h_2 X_2,  h_2 \pder{}{h_1} + h_1 \pder{}{h_2},  \pder{}{h_3}\right)\\
&= 3,
\end{align*}
then $\L$ is a smooth 3-dimensional manifold.

Further,
$
\pi(\L) = \Exp(N) = \tA$ by Th. \ref{th:Exp-1}. Moreover, since $\Exp = \pi \circ \Phi$ and $\map{\Exp}{N}{\tA}$ is a diffeomorphism by Th. \ref{th:Exp-1}, then $\map{\pi}{\L}{\tA}$ is a diffeomorphism as well.

Let us show that $\restr{\sigma}{\L} = 0$. Take any $\lam = e^{t \vG}(\lam_0) \in \L$, $(\lam_0, t) \in N$, then $T_{\lam} \L = \R \vG(\lam) \oplus e^{t\vG}_* (T_{\lam_0}C)$. Take any two vectors $T_{\lam} \L \ni v_i = r_i \vG(\lam) + e^{t\vG}_* w_i$, $w_i \in T_{\lam_0}C$, $i = 1, 2$. Then
\begin{align*}
\sigma(v_1, v_2) = r_1 \sigma(\vG(\lam_0), w_2) + r_2 \sigma(w_1, \vG(\lam_0)) = 0
\end{align*}
since $\sigma(w_i, \vG(\lam_0)) = \langle dG, w_i \rangle = 0$ by virtue of $w_i \in T_{\lam_0} C = \{ dG = 0\}$.

So the 1-form $\restr{s}{\L}$ is closed. But $\tA$ is simply connected, thus $\L$ is simply connected as well. Consequently,  $\restr{s}{\L}$ is exact by the Poincar\'e lemma.
\end{proof}

\begin{theorem}\label{th:optI}
For any point $q_1 \in \intt \A = I^+(q_0)$  the strictly normal trajectory $q(t) = \Exp(\lam, t)$, $t \in [0, t_1]$, is the unique  optimal trajectory of the reduced \sL problem \eq{pr31}--\eq{pr34} connecting   $q_0$  with $q_1$, where $(\lam, t_1) = \Exp^{-1}(q_1) \in N$. 
\end{theorem}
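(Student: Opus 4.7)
The plan is to invoke the sufficient optimality condition of Theorem~\ref{th:suf_opt} with the Lagrangian manifold $\L$ built in the preceding Lemma, using the $\Exp$-diffeomorphism of Theorem~\ref{th:Exp-1} to identify the unique candidate extremal.

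First I would note that, by Theorem~\ref{th:Exp-1}, the given $q_1 \in \tA$ has a unique preimage $(\lam, t_1) \in N$ under $\Exp$, yielding the arclength-parametrized strictly normal trajectory $\tq(t) = \pi(e^{t\vH}(\lam))$, $t \in [0, t_1]$, from $q_0$ to $q_1$. On the level set $\{G=0\} = \{h_1^2 - h_2^2 = 1\}$ a short computation gives $dG = -h_1\, dh_1 + h_2\, dh_2 = dH$, hence $\vG = \vH$ there; thus the covector lift $\tlam_t = e^{t\vG}(\lam)$ coincides with $e^{t\vH}(\lam)$ and lies in $\L$ for $t > 0$. The preceding Lemma has already verified all the hypotheses of Theorem~\ref{th:suf_opt}: $\L \subset G^{-1}(0) \cap O$ is Lagrangian with $\restr{s}{\L}$ exact, and $\map{\pi}{\L}{\tA}$ is a diffeomorphism.

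To conclude optimality and uniqueness, I would cast the time-maximal problem \eq{pr31}--\eq{pr34} in the minimization form required by Theorem~\ref{th:suf_opt} by taking $\varphi \equiv -1$, so that $J[q] = -\tau$ and $t_1 \to \max$ becomes $J \to \min$. Let $q : [0, \tau] \to M$ be an arbitrary competing admissible trajectory with $q(0)=q_0$ and $q(\tau)=q_1$. Since $q$ is future directed timelike starting at $q_0$, one has $q(t) \in I^+(q_0) = \pi(\L)$ for every $t > 0$. Theorem~\ref{th:suf_opt} then delivers $-t_1 < -\tau$, i.e.\ $\tau < t_1$, unless $q \equiv \tq$ pointwise; this gives both the optimality of $\tq$ and its uniqueness.

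The one delicate point—and the main obstacle—is that the common initial endpoint $q_0$ lies on $\partial \pi(\L)$ rather than in $\pi(\L)$ itself, so Theorem~\ref{th:suf_opt} is not literally applicable to the full time interval. I would handle this by running the sufficient-optimality argument on a sub-interval $[\eps, t_1]$: using the diffeomorphism $\map{\pi}{\L}{\tA}$ to lift $q(\eps')$ (for a suitable $\eps' > 0$ with $q(\eps') = \tq(\eps)$, obtainable since both trajectories are continuous with common initial point $q_0$ and the image of $\tq$ parametrizes a curve in $\tA$ for $t > 0$), comparing $\tq|_{[\eps, t_1]}$ with the corresponding piece of $q$, and passing to the limit $\eps \to 0^+$ by continuity of trajectories and of the length functional. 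Apart from this boundary technicality the proof is a direct assembly of Theorem~\ref{th:Exp-1}, the preceding Lemma, and Theorem~\ref{th:suf_opt}.
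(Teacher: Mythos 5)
Your setup (the $\Exp$-diffeomorphism, the identity $\vG=\vH$ on $\{G=0\}$, recasting $t_1\to\max$ as $\int(-1)\,dt\to\min$, and the remark that competitors stay in $I^+(q_0)=\pi(\L)$ for $t>0$) is fine and matches the paper's framework, and you correctly diagnose the real obstacle: $q_0\notin\pi(\L)$ and $\tlam_0=\lam\in C\not\subset\L$, so Theorem~\ref{th:suf_opt} cannot be invoked on all of $[0,t_1]$. The genuine gap is in your repair of this obstacle. Theorem~\ref{th:suf_opt} compares curves with \emph{identical} endpoints, so your truncation scheme needs a time $\eps'>0$ with $q(\eps')=\tq(\eps)$, and you claim such $\eps'$ is ``obtainable since both trajectories are continuous with common initial point $q_0$.'' That implication is false: two distinct admissible curves from $q_0$ to $q_1$ need not meet at any intermediate time. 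For instance, with $\tq(t)=(t,0,0)$ a competitor can avoid the ray $\{y=z=0,\ x>0\}$ at all times $t\in(0,\tau)$. Nor can you splice in a short connecting arc from $\tq(\eps)$ to $q(\eps')$: admissible curves are future directed, and $q(\eps')\in J^+(\tq(\eps))$ generally fails. So the sub-interval comparison on which your limit $\eps\to 0^+$ is supposed to act never exists, and the proof does not close.

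The paper closes the same gap by an algebraic rather than analytic device: left-invariance plus the fact that the attainable set $\A$ of a left-invariant system is a sub\emph{semigroup} of $M$. Writing $\lam_1=(\psi_1,c_1)\in C$ and choosing $\widehat q=\Exp\big((\psi_1-c_1t_0,c_1),t_0\big)$ for some $t_0>0$, one checks that $\widehat q\cdot\Exp(\lam_1,t)=\Exp\big((\psi_1-c_1t_0,c_1),t_0+t\big)$, so the translated extremal is again an arc of a field extremal, now with \emph{both} endpoints in $\tA=\pi(\L)$; by the semigroup property every translated competitor also lies in $\tA$. Theorem~\ref{th:suf_opt} then applies with no boundary issue, and translating back (left translations preserve admissibility, time and endpoints) gives optimality and uniqueness of the original trajectory. (The paper's phrase ``take any $\widehat q\in\tA$'' is itself loose: for generic $\widehat q$ the translate is \emph{not} an arc of a field extremal --- e.g.\ the translate of $(t,0,0)$ by $\widehat q$ with $\hat y\neq0$ is a straight line leaving the plane $\{z=0\}$, while the only straight field extremals lie in that plane --- but the compatible choice of $\widehat q$ above repairs this.) If you insist on a direct analytic route, the workable version of your idea is a calibration argument: let $\phi$ on $\tA$ be a primitive of the pullback of $\restr{s}{\L}$ under the inverse of $\map{\pi}{\L}{\tA}$, note that $\langle d\phi,\dot q\rangle\le-1$ for every admissible velocity with equality exactly along the field, integrate along $\tq|_{[\eps,t_1]}$ and $q|_{[\eps',\tau]}$ separately, and then prove that $\phi$ extends continuously to the boundary point $q_0$; that continuity statement is precisely the boundary work your sketch is missing.
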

\begin{proof} 
Take any $\lam_0 \in C$, $t_1 > t_0 > 0$. Then the Lagrangian manifold $\L$ \eq{L} and the  extremal $\tlam_t = e^{t\vG}(\lam_0)$, $t \in [t_0, t_1]$, satisfy hypotheses of 
  Th. \ref{th:suf_opt}. Thus the   trajectory $\tq(t) = \pi(\tlam_t)$, $t \in [t_0, t_1]$, is a strict maximizer for the reduced \sL problem \eq{pr31}--\eq{pr34}. 
	
	Take any $\lam_1 \in C$, $t_2 > 0$, and consider the extremal trajectory $\bar q(t) = \Exp(\lam_1, t)$, $t \in [0, t_2]$. Take any $\widehat q \in \tA$. The set $\A$ is an attainable set of a left-invariant control system on a Lie group, thus it is a semigroup. Consequently, $\widehat q \cdot \bar q(t)$ is an extremal trajectory contained in $\tA$. By the previous paragraph, this trajectory is a strict maximizer for the reduced \sL problem \eq{pr31}--\eq{pr34}. By left invariance of this problem, the same holds for the trajectory $\bar q(t) $, $t \in [0, t_2]$.
\end{proof}

Denote the cost function for the equivalent reduced \sL problems \eq{pr21}--\eq{pr24} and \eq{pr31}--\eq{pr34}:
\begin{align*}
\td(q_1) &= \sup \{ l(q(\cdot)) \mid \text{ traj. $q(\cdot)$ of \eq{pr21}--\eq{pr24}, $q(0) = q_0$, $q(t_1) = q_1$}\}\\
&=\sup \{ t_1 > 0 \mid \exists \text{ traj. $q(\cdot)$ of \eq{pr31}--\eq{pr34} s.t. $q(0) = q_0$, $q(t_1) = q_1$}\}, 
\end{align*}
where $q_1 \in \intt \A = I^+(q_0)$. This function has the following description and regularity property.

\begin{theorem}\label{th:distI}
Let  $q = (x, y, z) \in I^+(q_0)$. Then
\be{tdq}
\td(q) = \sqrt{x^2-y^2} \cdot \frac{p}{\sinh p}, \qquad p = \b\left(\frac{z}{x^2-y^2}\right). 
\ee

The function $\map{\td}{I^+(q_0)}{\R_+}$ is real-analytic.
\end{theorem}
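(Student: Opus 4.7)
The plan is to read off $\td(q)$ from the optimal synthesis provided by Theorems \ref{th:Exp-1} and \ref{th:optI}, then simplify using the identity relating $x^2-y^2$ to $\sinh p$ derived in the proof of Theorem \ref{th:Exp-1}.

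First I would note that by Theorem \ref{th:optI}, for $q_1 \in I^+(q_0)$ the unique maximizer of the reduced \sL problem is the arclength-parametrized strictly normal extremal $\Exp(\lam, t)$ defined on $[0, t_1]$ with $(\lam, t_1) = \Exp^{-1}(q_1)$. Since the equivalent time-maximal formulation \eq{pr31}--\eq{pr34} computes the same supremum, $\td(q_1) = t_1$; that is, $\td$ equals the $t$-component of $\Exp^{-1}$.

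Now plug in Theorem \ref{th:Exp-1}. In the case $z = 0$ one gets $\td(q) = \sqrt{x^2-y^2}$, and since $p = \b(0) = 0$ and $\lim_{p \to 0} p/\sinh p = 1$, the formula \eq{tdq} holds continuously. In the case $z \neq 0$ I would use the formula $t = 2p/c$ together with the identity $x^2 - y^2 = 4 \sinh^2 p/c^2$ (equation \eq{x2-y2}) to get $|c| = 2|\sinh p|/\sqrt{x^2-y^2}$. Since $\sgn c = \sgn z = \sgn p$ (the last equality holds because $\a(p) = z/(x^2-y^2)$ is odd and strictly monotone), $p$ and $c$ have the same sign, hence
\[
\td(q) = t = \frac{2p}{c} = \frac{2|p|}{|c|} = \frac{|p|}{|\sinh p|}\sqrt{x^2-y^2} = \frac{p}{\sinh p}\sqrt{x^2-y^2},
\]
which is \eq{tdq}.

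For real-analyticity, I would simply invoke Theorem \ref{th:Exp-1}: $\Exp$ is a real-analytic diffeomorphism on $N$ onto $\tA = I^+(q_0)$, so the inverse $\Exp^{-1}$ is real-analytic on $I^+(q_0)$, and $\td$ is the last coordinate of this inverse map.

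The only delicate point I anticipate is the sign bookkeeping for $c$ and $p$ in the $z\neq 0$ case (to justify $t = 2|p|/|c|$) and verifying that the two cases $z = 0$ and $z \neq 0$ glue into a single real-analytic formula; the latter follows either by direct inspection via $\Exp^{-1}$ or by noting that $p/\sinh p$ extended by $1$ at $0$ is entire. No substantive analytic difficulty arises, because all the work was done in Theorems \ref{th:Exp-1} and \ref{th:optI}.
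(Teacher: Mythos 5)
Your proposal is correct and takes essentially the same route as the paper: identify $\td$ with the $t$-component of $\Exp^{-1}$ via Theorem \ref{th:optI}, then read off \eq{tdq} from the inversion formulas \eq{invz0}, \eq{invzn0} together with the identity \eq{x2-y2}. Your explicit sign bookkeeping for $c$, $p$, and your derivation of real-analyticity from the analyticity of $\Exp^{-1}$ (the paper instead notes that the two factors $\sqrt{x^2-y^2}$ and $\frac{p}{\sinh p}$ are real-analytic on $I^+(q_0)$) merely fill in details the paper leaves implicit.
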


\begin{proof}
Let $q \in I^+(q_0)$, then the \sL length maximizer from $q_0$ to $q$  for the reduced \sL problem \eq{pr31}--\eq{pr34} is described in Th. \ref{th:optI}, and the expression for $\td(q)$ in \eq{tdq} follows from the expression for $t$ in \eq{invzn0}. 

The both functions $\sqrt{x^2-y^2}$ and $\frac{p}{\sinh p}$ are real-analytic on $ I^+(q_0)$, thus $\td$ is real-analytic as well.
\end{proof}

\subsection[Optimality in the full \sL problem]{Optimality in the full \sL problem \\on the Heisenberg group}
In this subsection we consider the full \sL problem \eq{prf1}--\eq{prf4}.

\begin{theorem}\label{th:distIf}
Let  $q_1  \in I^+(q_0)$. Then
the \sL length maximizers for the full  problem \eq{prf1}--\eq{prf4}   are reparametrizations of the corresponding \sL length maximizer for the reduced   problem \eq{pr31}--\eq{pr34} described in Th. {\em \ref{th:optI}}. 

In particular, $\restr{d}{I^+(q_0)} = \td$.
\end{theorem}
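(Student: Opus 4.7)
My plan is to sandwich $d(q_0, q_1)$ between $\td(q_1)$ from below and from above. The lower bound is immediate: the strictly normal maximizer from Theorem~\ref{th:optI} is future directed timelike, hence nonspacelike, hence admissible in the full problem \eq{prf1}--\eq{prf4} and realizing the length $\td(q_1)$, so $d(q_0, q_1) \geq \td(q_1)$.

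For the reverse inequality I will use a simple perturbation argument. Given any future directed nonspacelike $\g : [0, t_1] \to M$ from $q_0$ to $q_1$ with controls $(u_1(t), u_2(t))$ satisfying $u_1 \geq |u_2|$, I would introduce the perturbed trajectory $\g_\eps$ driven by the controls $(u_1(t) + \eps, u_2(t))$ for small $\eps > 0$. Three elementary facts do the work: (i)~$\g_\eps$ is strictly timelike since $u_1 + \eps > |u_2|$, so its endpoint $q_1^\eps$ lies in $I^+(q_0)$; (ii)~$q_1^\eps \to q_1$ as $\eps \to 0$ by continuous dependence of ODE solutions on parameters; (iii)~$l(\g_\eps) \geq l(\g)$, since $(u_1 + \eps)^2 - u_2^2 \geq u_1^2 - u_2^2$ (using $u_1 \geq 0$). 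Theorem~\ref{th:optI} applied to the reduced-problem competitor $\g_\eps$ then gives $l(\g_\eps) \leq \td(q_1^\eps)$, and continuity of $\td$ on $I^+(q_0)$ (real-analyticity from Theorem~\ref{th:distI}) yields $l(\g) \leq \td(q_1)$ in the limit. Taking the supremum over $\g$ proves $\restr{d}{I^+(q_0)} = \td$.

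For the reparametrization assertion, let $\g$ be any length maximizer in the full problem, so $l(\g) = \td(q_1)$. On the lightlike set $E = \{t \in [0, t_1] \mid u_1(t) = |u_2(t)|\}$, the identity $(u_1 + \eps)^2 - u_2^2 = 2 \eps u_1 + \eps^2$ shows that the contribution of $E$ to $l(\g_\eps)$ is at least $\int_E \sqrt{2 \eps u_1}\, dt$. If $E$ contains a subset of positive measure on which $u_1 \geq \de > 0$, this yields $l(\g_\eps) - l(\g) \geq c \sqrt{\eps}$ for some $c > 0$, which is incompatible with the $O(\eps)$ Lipschitz bound on $\td(q_1^\eps) - \td(q_1)$ coming from analyticity of $\td$ at $q_1 \in I^+(q_0)$ together with $q_1^\eps - q_1 = O(\eps)$. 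After stripping away constant subintervals $\{u_1 = u_2 = 0\}$, which are trivial reparametrizations, $\g$ is therefore timelike almost everywhere, so it is a reduced-problem competitor attaining $\td(q_1)$; the uniqueness part of Theorem~\ref{th:optI} then identifies $\g$ with the reduced maximizer up to reparametrization.

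\textbf{Main obstacle.} The delicate step is the last one: producing the $\sqrt{\eps}$ length gain on a nondegenerate lightlike subarc and pitting it against the $O(\eps)$ behavior of $\td$, while cleanly separating genuine lightlike portions ($u_1 = |u_2| > 0$) from reparametrization-induced constant subintervals ($u_1 = u_2 = 0$).
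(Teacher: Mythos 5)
Your proposal is correct, and it shares the paper's overall strategy --- perturb an arbitrary nonspacelike competitor into a strictly timelike one with a nearby endpoint, then invoke the regularity of the reduced value function $\td$ and the uniqueness in Th.~\ref{th:optI} --- but the execution of the crucial step is genuinely different. The paper normalizes $u_1 \equiv 1$, clips $u_2$ at $\pm(1-\de)$, runs the argument by contradiction, and disposes of the equality case $l(q(\cdot)) = l(\tq(\cdot))$ by appealing to PMP: a non-reduced trajectory violates the necessary conditions of Sec.~\ref{sec:PMP}, hence is non-optimal, which upgrades the assumption to a strict inequality that mere continuity of $\td$ then contradicts. You avoid PMP entirely: your additive perturbation $(u_1, u_2) \mapsto (u_1+\eps, u_2)$ is length-nondecreasing, so the upper bound $\restr{d}{I^+(q_0)} \leq \td$ comes out directly, with no case analysis; and the exclusion of genuine lightlike portions ($u_1 = |u_2| > 0$ on a set of positive measure) on a maximizer is done quantitatively, by pitting the gain $l(\g_\eps) - l(\g) \geq c\sqrt{\eps}$ coming from $\sqrt{2\eps u_1}$ against the bound $\td(q_1^\eps)-\td(q_1) = O(\eps)$. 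The price is that you need local Lipschitzness of $\td$ rather than just continuity, but that is available from the real-analyticity established in Th.~\ref{th:distI}, which both proofs already rely on. What each approach buys: the paper reuses machinery it has already built (the classification of extremals), while your route is more self-contained and quantitative, and it correctly handles lightlike sets of positive measure, whereas the paper's dichotomy is phrased only in terms of lightlike subintervals. Your remaining steps --- collapsing the set $\{u_1 = u_2 = 0\}$ by a monotone reparametrization and passing to arclength parametrization so that the uniqueness of Th.~\ref{th:optI} applies --- are glossed, but at the same level of informality as in the paper itself.
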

\begin{proof}
Let $q(t)$, $t \in [0, t_1]$, be a trajectory of the full problem \eq{prf1}--\eq{prf4} such that $q(0) = q_0$, $q(t_1) = q_1$, and let $q(\cdot)$ be not a trajectory of the reduced   problem \eq{pr21}--\eq{pr24} (that is, there exist $0 \leq \tau_1 < \tau_2 \leq t_1$ such that $\restr{\left(u_1 - |u_2|\right)}{[\tau_1, \tau_2]} \equiv 0$).
Let $\tq(t)$, $t \in [0, \tit_1]$, be the optimal trajectory in the reduced   problem \eq{pr31}--\eq{pr34} connecting $q_0$ with $q_1$. We show that $l(q(\cdot)) < l(\tq(\cdot))$. By contradiction, suppose that $l(q(\cdot)) \geq l(\tq(\cdot))$.

Let $l(q(\cdot)) = l(\tq(\cdot))$. The trajectory $q(\cdot)$ does not satisfy the PMP for the full problem \eq{prf1}--\eq{prf4} (see Sec. \ref{sec:PMP}), thus it is not optimal in this problem. Thus there exists a trajectory $\bar q(\cdot)$ of this problem with the same endpoints and  $l(\bar q(\cdot)) > l(\tq(\cdot))$. The curve $\bar q(\cdot)$ cannot be a trajectory of the reduced system since its length is greater than the maximum $ l(\tq(\cdot))$ in this problem. So we can denote $\bar q(\cdot))$ as $q(\cdot)$ and assume that $l(q(\cdot)) > l(\tq(\cdot))$.

After time reparametrization we obtain that the control $u(t) = (u_1(t), u_2(t))$ corresponding to the trajectory $q(t)$, $t \in [0, t_1]$, satisfies $u_1(t) \equiv 1$, thus $|u_2(t)|\leq 1$. 

For any $\de \in (0, 1)$ define a function
$$
u_2^{\de}(t) = 
\begin{cases}
u_2(t) &\text{for } |u_2(t)| \leq 1 - \de, \\  
1-\de &\text{for } u_2(t) > 1 - \de, \\
\de-1 &\text{for } u_2(t) < \de-1,
\end{cases}
$$
so that 
\be{u2de}
|u_2^{\de}(t)| \leq 1 - \de, \quad |u_2^{\de}(t) - u_2(t)| \leq \de, \qquad t \in [0, t_1].
\ee
Define an admissible control $u^{\de}(t) = (1, u_2^{\de}(t))$, $t \in [0, t_1]$, and consider the corresponding trajectory $q^{\de}(t)$, $t \in [0, t_1]$, of the reduced problem \eq{pr21}--\eq{pr24}  with $q^{\de}(0) = q_0$. Denote its endpoint $q^{\de}(t_1) = q_1^{\de}$. By virtue of the second inequality in \eq{u2de},
\begin{align*}
&l(q^{\de}(\cdot)) = \int_0^{t_1} \sqrt{1 - \left(u_2^{\de}(t)\right)^2} dt \to \int_0^{t_1} \sqrt{1 - u_2^2(t)} dt = 
l(q(\cdot)), \\
&\max_{t \in [0, t_1]} \|q^{\de}(t) - q(t)\| \to 0
\end{align*}
as $\de \to + 0$.
So for sufficiently small $\de > 0$ we have
$$
l(q^{\de}(\cdot)) > l(\tq(\cdot)) \qquad \text{and} \qquad \|q_1^{\de} - q_1\| \text{ is small},
$$
where $\|\cdot\|$ is any norm in $M \cong \R^3$. In particular, $q_1^{\de} \in I^+(q_0)$ for small $\de>0$. 

Now let $\hq^{\de}(t)$, $t \in \left[0, \wht_1^{\de}\right]$, be the optimal trajectory  in the reduced problem \eq{pr31}--\eq{pr34} with the boundary conditions $\hq^{\de}(0) = q_0$, $\hq^{\de}\left(\wht_1^{\de}\right) = q_1^{\de}$. Then for   small $\de > 0$
\begin{align*}
&l\left(\hq^{\de}(\cdot)\right) \geq l(q^{\de}(\cdot)) > l(\tq(\cdot)), \\
&\left\|q_1^{\de} - q_1\right\| = \left\|\hq^{\de}\left(\wht_1^{\de}\right) - \tq(t_1)\right\| \text{ is small}. 
\end{align*}
By virtue of Th. \ref{th:distI}, the \sL distance $\map{\td}{I^+(q_0)}{\R_+}$ in the reduced problem \eq{pr31}--\eq{pr34}  is continuous, thus for   small $\de > 0$
$$
|l\left(\hq^{\de}(\cdot)\right) - l(\tq(\cdot))| = |\td(q_1^{\de}) - \td(q_1)|  \text{ is small}.
$$
Summing up, for   small $\de > 0$ the difference
$$
l(q(\cdot)) - l(\tq(\cdot)) < 
\left( l(q(\cdot)) - l\left(q^{\de}(\cdot)\right)\right) + 
\left( l\left(\hq^{\de}(\cdot)\right) - l\left(\tq(\cdot)\right)\right)
$$
becomes arbitrarily small, a contradiction.
Thus $\tq(\cdot)$ is optimal and $q(\cdot)$ is not optimal in the full \sL problem \eq{prf1}--\eq{prf4}.
\end{proof}

 \begin{theorem}\label{th:optJ}
Let $q_1 = (x_1, y_1, z_1) \in \partial A = J^+(q_0) \setminus I^+(q_0)$, $q_1 \neq q_0$.  Then an optimal trajectory in the full  \sL problem \eq{prf1}--\eq{prf4} is a future directed lightlike piecewise smooth trajectory with one or two subarcs generated by the vector fields $X_1 \pm X_2$. In detail, up to a reparametrization:
\begin{itemize}
\item[$(1)$]
If $z_1 = 0$, then 
$$
u(t) \equiv \const = (1, \pm 1), \qquad q(t) = e^{t(X_1 \pm X_2)} = (t, \pm t, 0), \qquad t \in [0, t_1], \quad t_1 = x_1. 
$$
\item[$(2)$]
If $z_1 > 0$, then 
\begin{align*}
&t \in [0, \tau_1] \then u(t) \equiv (1, - 1), \qquad q(t) = e^{t(X_1 - X_2)} = (t, -t, 0), \\
&t \in [\tau_1, \tau_1+ \tau_2] \then u(t) \equiv (1,  1), \\
&\qquad\qquad\qquad q(t) = e^{(t-\tau_1)(X_1 + X_2)}e^{\tau_1(X_1 - X_2)} = (t, t - 2\tau_1, \tau_1(t-\tau_1)), \\
&\tau_1 = \frac{x_1-y_1}{2}, \qquad \tau_2 = \frac{x_1+y_1}{2}.
\end{align*}
\item[$(3)$]
If $z_1 < 0$, then 
\begin{align*}
&t \in [0, \tau_1] \then u(t) \equiv (1,  1), \qquad q(t) = e^{t(X_1 + X_2)} = (t, t, 0), \\
&t \in [\tau_1, \tau_1+ \tau_2] \then u(t) \equiv (1,  -1), \\
&\qquad\qquad\qquad q(t) = e^{(t-\tau_1)(X_1 - X_2)}e^{\tau_1(X_1 + X_2)} = (t, 2\tau_1 -  t, -\tau_1(t-\tau_1)), \\
&\tau_1 = \frac{x_1+y_1}{2}, \qquad \tau_2 = \frac{x_1-y_1}{2}.
\end{align*}
\end{itemize}
\end{theorem}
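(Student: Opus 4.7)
The plan is to split the theorem into two independent pieces: (I) verify that the explicit piecewise lightlike curves in cases $(1)$--$(3)$ are admissible future directed trajectories that actually reach $q_1$; and (II) show that every future directed nonspacelike curve from $q_0$ to any $q_1 \in \partial \A \setminus \{q_0\}$ has \sL length zero. Together these yield that any lightlike connector, and in particular the explicit one from (I), realises the supremum in \eq{d}, which is exactly the assertion of the theorem.

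Step (I) will be a direct computation with the Heisenberg product. The elementary flows are $e^{t(X_1\pm X_2)} = (t,\pm t, 0)$, and composing gives
\[
e^{(t-\tau_1)(X_1+X_2)}\circ e^{\tau_1(X_1-X_2)} = \bigl(t,\, t-2\tau_1,\, \tau_1(t-\tau_1)\bigr),
\]
together with the symmetric product in case $(3)$. Matching the endpoint to $q_1$ and using the defining boundary relation $x_1^2-y_1^2 = 4|z_1|$ pins down $\tau_1, \tau_2 \geq 0$ to the stated values and forces $\tau_1\tau_2 = |z_1|$. Each arc has control $u=(1,\pm 1)\in U$ with $u_1 = |u_2|$, so the trajectory is future directed and lightlike, and the length integrand $\sqrt{u_1^2-u_2^2}$ vanishes identically.

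Step (II) is the main obstacle. I will argue by contradiction, adapting the truncation used in the proof of Th.~\ref{th:distIf}. Assume some admissible $\g$ from $q_0$ to $q_1\in\partial\A\setminus\{q_0\}$ has $l(\g)=\ell>0$; reparametrize so that $u_1\equiv 1$ and apply the clamp $u_2^\de = \max\{-(1-\de), \min(u_2, 1-\de)\}$. The resulting trajectory $q^\de$ is strictly timelike with endpoint $q_1^\de \to q_1$ and $l(q^\de)\to \ell$ by dominated convergence, so for small $\de$ one has $q_1^\de\in I^+(q_0)$ and $\td(q_1^\de) \geq l(q^\de) > \ell/2$ by Th.~\ref{th:distI}. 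I will then read off from the formula $\td(q) = \sqrt{x^2-y^2}\cdot p/\sinh p$, $p=\b(z/(x^2-y^2))$, that $\td$ extends continuously by $0$ to $\partial\A\setminus\{q_0\}$: if $z_1=0$, then $\sqrt{x^2-y^2}\to 0$ while $p/\sinh p$ stays bounded; if $z_1\neq 0$, then $z/(x^2-y^2)\to\pm 1/4$ forces $p\to\pm\infty$, hence $p/\sinh p\to 0$ while $\sqrt{x^2-y^2}\to 2\sqrt{|z_1|}$ stays bounded. Continuity then gives $\td(q_1^\de)\to 0$, contradicting $\td(q_1^\de)>\ell/2$. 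The delicate point is the simultaneous coordination of three requirements on the clamp --- closeness of $q_1^\de$ to $q_1$, strict timelikeness so that $q_1^\de\in I^+(q_0)$, and loss of only a small fraction of the length --- all of which follow from the pair of inequalities $|u_2^\de - u_2|\leq \de$ and $|u_2^\de|\leq 1-\de$.
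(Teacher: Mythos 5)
Your Step (I) is fine as far as it goes, and the core of your Step (II) is correct --- but it takes a genuinely different route from the paper. The paper disposes of non-lightlike connectors in one line: if a nonspacelike trajectory reaching $q_1 \in \partial\A$ contained a future directed timelike arc, its endpoint would lie in $\intt\A = I^+(q_0)$ (a timelike arc pushes the state into the chronological future, and attainability then keeps it in the interior), contradicting $q_1 \in \partial\A$. You instead rerun the clamping construction from the proof of Th.~\ref{th:distIf} and combine it with the observation that formula \eq{tdq} extends $\td$ continuously by zero to $\partial\A \setminus \{q_0\}$; your two-case analysis of that extension (when $z_1 = 0$: $\sqrt{x^2-y^2} \to 0$ while $p/\sinh p \le 1$; when $z_1 \neq 0$: $z/(x^2-y^2) \to \pm\tfrac14$ forces $p \to \pm\infty$, so $p/\sinh p \to 0$) is sound, and every tool you invoke (Th.~\ref{th:distI}, the clamp inequalities, strict timelikeness giving $q_1^{\de} \in I^+(q_0)$) is already available before Th.~\ref{th:optJ}. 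Your argument is longer and quantitative; the paper's is shorter but leans on the openness/semigroup property of the attainable set.

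There is, however, a genuine gap at the end. The theorem (and the Corollary immediately after it, which cites it for uniqueness) asserts that \emph{every} optimal trajectory to $q_1$ is lightlike, piecewise smooth, with one or two subarcs given explicitly in items (1)--(3). You prove only the converse half: that the explicit broken curves are admissible and realise the supremum $d(q_1) = 0$. By your own Step (II) every connector has zero length, hence every connector is optimal; so the theorem's ``is'' requires classifying \emph{all} nonspacelike connectors ending on $\partial\A$, and matching endpoints for the two prescribed curve families (your Step (I)) does not do that --- a priori a zero-length connector could, say, switch infinitely often between the controls $(1,1)$ and $(1,-1)$. The missing piece is short: set $\xi = (x+y)/2$, $\eta = (x-y)/2$, so that $\dot\xi = (u_1+u_2)/2 \ge 0$, $\dot\eta = (u_1-u_2)/2 \ge 0$, and $z(t_1) = \int_0^{t_1}(\eta\dot\xi - \xi\dot\eta)\,dt$; then $|z(t_1)| \le \xi(t_1)\eta(t_1)$, and the boundary condition $q_1 \in \partial\A$ is exactly the equality case, which (for $z_1 > 0$, say) forces $\dot\xi = 0$ a.e.\ while $\eta < \eta(t_1)$ and $\dot\eta = 0$ a.e.\ while $\xi > 0$, i.e.\ the control is $u_2 = -u_1$ first and $u_2 = +u_1$ afterwards, with a single switch and durations $\tau_1 = (x_1-y_1)/2$, $\tau_2 = (x_1+y_1)/2$. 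Note that this computation also yields lightlikeness of every connector directly, so it could even replace your Step (II); as written, your proposal establishes optimality of the named curves but not that they are the only optimal ones.
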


The broken lightlike trajectories with two arcs described in items (1), (2) of Th. \ref{th:optJ} are shown in Fig. \ref{fig:S0opt}.

\begin{proof} 
Let $q(t)$, $t \in [0, t_1]$, be a future directed nonspacelike trajectory connecting $q_0$ and $q_1$. If $q(\cdot)$ is not lightlike, then there exists a future directed timelike arc $q(t)$, $t \in [s_1, s_2]$, $0 \leq s_1 < s_2 \leq t_1$, thus $q(t_1) \in \intt \A$, a contradiction. Thus $q(\cdot)$ is lightlike, and the statement follows by direct computation of trajectories of the lightlike vector fields $X_1 \pm X_2$. 
\end{proof}

\begin{corollary}
For any $q_1 \in J^+(q_0)$, $q_1 \neq q_0$, there is a unique, up to reparametri\-za\-tion, \sL length minimizer in the full problem \eq{prf1}--\eq{prf4} that connects $q_0$ and $q_1$:
\begin{itemize}
\item
if $q_1 \in \intt \A =   I^+(q_0)$, then $q(\cdot)$ is a future directed timelike strictly normal trajectory described in Theorems  $\ref{th:optI}$, $\ref{th:distIf}$.
\item
if $q_1 \in \partial \A = J^+(q) \setminus I^+(q_0)$, then $q(\cdot)$ is a future directed lightlike nonstrictly normal trajectory described in Th.  $\ref{th:optJ}$.
\end{itemize}
\end{corollary}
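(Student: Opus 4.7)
The statement is a consolidation corollary: it packages Theorems \ref{th:optI}, \ref{th:distIf}, and \ref{th:optJ} into a single classification of sub-Lorentzian maximizers emanating from $q_0$. So the plan is simply to split on whether $q_1$ lies in the interior $I^+(q_0)$ of the attainable set or on its boundary $\partial\A = J^+(q_0)\setminus I^+(q_0)$, and invoke the appropriate earlier theorem in each case. The content to be verified beyond a mere citation is the \emph{uniqueness up to reparametrization} across the whole family of future directed nonspacelike (not just timelike, not just lightlike) competitors.

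For the first case $q_1 \in I^+(q_0)$, I would argue as follows. By Theorem \ref{th:optI} there is a unique strictly normal timelike trajectory $\tq(\cdot) = \Exp(\lam,\cdot)$ from $q_0$ to $q_1$, with $(\lam, t_1) = \Exp^{-1}(q_1)$, and it is the unique maximizer in the reduced problem \eq{pr31}--\eq{pr34}. Theorem \ref{th:distIf} then asserts that any length maximizer of the full problem \eq{prf1}--\eq{prf4} connecting $q_0$ to $q_1$ must be a reparametrization of $\tq(\cdot)$: indeed the proof of Theorem \ref{th:distIf} shows by perturbation that every future directed nonspacelike curve which fails to be a timelike trajectory has length strictly smaller than $\td(q_1)$. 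Hence any maximizer is (up to reparametrization) the strictly normal timelike curve $\tq$, which settles uniqueness in this case.

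For the second case $q_1 \in \partial\A$, $q_1\neq q_0$, Theorem \ref{th:optJ} already identifies an explicit optimal lightlike broken trajectory $q(\cdot)$ with one or two edges along $X_1\pm X_2$. For uniqueness I would invoke the argument used in the proof of Theorem \ref{th:optJ}: any future directed nonspacelike curve joining $q_0$ to a point of $\partial\A$ must be everywhere lightlike, because the presence of any timelike subarc would push the terminal point into the open causal future $I^+(q_0)=\intt\A$, contradicting $q_1\in\partial\A$. Thus every candidate is a concatenation of $X_1+X_2$- and $X_1-X_2$-arcs, and a direct computation in the Heisenberg product shows that matching the endpoint $q_1=(x_1,y_1,z_1)$ forces the number and lengths of the edges exactly as in the three sub-cases; the order (sign first) is pinned down by $\sgn z_1$. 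Since the common length of every such lightlike curve is $0$, all these curves are length maximizers, but they are all reparametrizations of one another (or equal as unparametrized paths), so uniqueness up to reparametrization holds.

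The only subtle point — and the place where I would write things out carefully rather than wave hands — is the perturbation step hidden in the appeal to Theorem \ref{th:distIf}: one must be sure that \emph{partially lightlike} competitors in the full problem (curves that mix timelike and lightlike subarcs, neither purely of one type) are also strictly dominated. This however is exactly what the $u_2^\de$-regularization argument in the proof of Theorem \ref{th:distIf} handles, because it replaces any control with $|u_2(t)|\leq 1$ by one with $|u_2^\de(t)|\leq 1-\de$, converting every lightlike portion into a strictly timelike one; continuity of $\td$ on $I^+(q_0)$ from Theorem \ref{th:distI} then yields the desired contradiction. With that step invoked, the corollary follows by simply merging the two cases.
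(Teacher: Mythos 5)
Your proposal is correct and follows essentially the paper's own route: the paper states this corollary without a separate proof, treating it exactly as you do --- a case split on $q_1 \in I^+(q_0)$ versus $q_1 \in \partial\A$, with uniqueness in the interior case supplied by Theorems \ref{th:optI} and \ref{th:distIf}, and in the boundary case by the ``all competitors are lightlike, hence the explicit broken trajectory'' computation behind Theorem \ref{th:optJ}. Your added observations (that the $u_2^{\de}$-regularization handles mixed timelike/lightlike competitors, and that on $\partial\A$ every nonspacelike curve has length zero so uniqueness of the maximizer reduces to uniqueness of the lightlike path) are accurate refinements of what the paper leaves implicit.
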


\begin{corollary}
Any \sL length maximizer of problem \eq{prf1}--\eq{prf4} of positive length is timelike and strictly normal.
\end{corollary}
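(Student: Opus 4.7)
The statement will follow almost immediately from the preceding corollary together with the trivial observation that every lightlike curve has zero length. My plan is as follows.

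Let $\g:[0,t_1]\to M$ be a \sL length maximizer of problem \eq{prf1}--\eq{prf4} with $l(\g)>0$. By left-invariance of the problem on the Heisenberg group, I may assume $\g(0)=q_0=\Id$, and I set $q_1=\g(t_1)\in J^+(q_0)$. I want to argue that $q_1\in I^+(q_0)$, after which the classification in the previous corollary closes the proof.

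First I would dispose of the degenerate endpoint. If $q_1=q_0$, then by the control system \eq{prf1}--\eq{prf2} the $x$-coordinate satisfies $\dot x=u_1\geq|u_2|\geq 0$ a.e., so $x\equiv 0$ forces $u_1\equiv 0$ and hence $u_2\equiv 0$; thus $\g$ is the constant curve and $l(\g)=0$, contradicting positivity. Next, suppose $q_1\in\partial\A\setminus\{q_0\}$. If $\g$ possessed a timelike subarc on some subinterval $[s_1,s_2]$, then $\g(s_2)\in I^+(\g(s_1))$ and concatenating with the remainder would place $q_1\in\intt\A=I^+(q_0)$, contradicting $q_1\in\partial\A$ (this is exactly the opening argument in the proof of Th. \ref{th:optJ}). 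So $\g$ must be lightlike throughout, meaning $u_1(t)=|u_2(t)|$ a.e., which makes the integrand in \eq{prf4} vanish identically and yields $l(\g)=0$, again contradicting $l(\g)>0$.

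Hence $q_1\in I^+(q_0)$. By Th. \ref{th:distIf} combined with Th. \ref{th:optI}, the \sL length maximizers from $q_0$ to $q_1$ are precisely the reparametrizations of the unique strictly normal, future directed timelike trajectory $\Exp(\lam,\cdot)$ with $(\lam,t_1)=\Exp^{-1}(q_1)$. Therefore $\g$ is (up to reparametrization) timelike and strictly normal, as claimed.

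There is essentially no obstacle here: the content of the corollary is already encoded in Th. \ref{th:distIf} and Th. \ref{th:optJ}, and the only point requiring a line of argument is observing that the lightlike maximizers of Th. \ref{th:optJ} have zero \sL length, so they cannot compete with a maximizer of positive length.
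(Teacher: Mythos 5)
Your proof is correct and follows essentially the same route the paper takes implicitly: the corollary is an immediate consequence of the preceding corollary (via Theorems \ref{th:distIf} and \ref{th:optJ}), since the lightlike maximizers reaching $\partial\A$ have zero length, so positive length forces the endpoint into $I^+(q_0)$ where the unique maximizer is timelike and strictly normal. Your extra care with the cases $q_1=q_0$ (monotonicity of $x$) and $q_1\in\partial\A$ (the lightlike argument from the proof of Th. \ref{th:optJ}) just makes explicit what the paper leaves to the reader.
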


\begin{remark}
The broken trajectories described in items $(2)$, $(3)$ of Th. {\em\ref{th:optJ}} are optimal in the \sL problem, while in sub-Riemannian problems trajectories with angle points cannot be optimal, see {\em \cite{hak_ledon}}.
Moreover, these broken trajectories are normal and nonsmooth, which is also impossible in sub-Riemannian geometry.
\end{remark}

\section{\SL distance}
Denote $d(q) := d(q_0, q)$, $q \in J^+(q_0)$.
 
\begin{theorem}\label{th:dist}
Let  $q = (x, y, z) \in J^+(q_0)$. Then
\be{dq}
d(q) = \sqrt{x^2-y^2} \cdot \frac{p}{\sinh p}, \qquad p = \b\left(\frac{z}{x^2-y^2}\right). 
\ee
In particular:
\begin{itemize}
\item[$(1)$]
$z = 0 \iff d(q) = \sqrt{x^2 - y^2}$,\\
\item[$(2)$]
$q \in J^+(q_0) \setminus I^+(q_0) \iff d(q) = 0$.
\end{itemize}
\end{theorem}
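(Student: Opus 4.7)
The proof splits naturally into two regimes depending on whether $q$ lies in the interior $I^+(q_0)$ or on the boundary $\partial \A = J^+(q_0) \setminus I^+(q_0)$ of the attainable set. The plan is to assemble Theorems \ref{th:distIf}, \ref{th:distI}, and \ref{th:optJ}, with a short continuity check at the light cone.

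First, for $q \in I^+(q_0)$, Theorem \ref{th:distIf} identifies $d$ with the reduced cost $\td$ on $I^+(q_0)$, and Theorem \ref{th:distI} gives the explicit expression \eq{tdq} in terms of $p = \b\bigl(z/(x^2-y^2)\bigr)$. This immediately yields formula \eq{dq} on $I^+(q_0)$.

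Next, for $q \in \partial \A \setminus \{q_0\}$, Theorem \ref{th:optJ} shows that every length maximizer from $q_0$ to $q$ in the full problem is lightlike (piecewise smooth with subarcs tangent to $X_1 \pm X_2$), hence has \sL length equal to $0$; for $q = q_0$ the only admissible trajectory is constant. So $d(q) = 0$ on $\partial \A$. To see that the closed-form \eq{dq} is consistent on $\partial \A$, I will argue by continuity using the asymptotic behavior of $\a$. Since $\a(p) \to \pm 1/4$ as $p \to \pm \infty$ (cf.\ Fig.~\ref{fig:alpha}), the condition $-x^2+y^2+4|z|=0$ defining $\partial \A$ corresponds to $z/(x^2-y^2) \to \pm 1/4$, i.e.\ $p \to \pm\infty$. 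Then $p/\sinh p \to 0$ exponentially, while $\sqrt{x^2-y^2}$ remains bounded on any compact subset of $\partial\A$; at $q_0$ both factors vanish. Hence the product tends to $0$, matching $d(q) = 0$.

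Finally, items (1) and (2) drop out. For (2), the equivalence $q \in \partial\A \setminus \{q_0\} \cup \{q_0\} \iff d(q) = 0$ follows from the two regimes above: on $I^+(q_0)$ the factor $p/\sinh p$ is strictly positive together with $\sqrt{x^2-y^2}>0$, so $d>0$ there, while we have just shown $d\equiv 0$ on $\partial\A$. For (1), if $z=0$ and $q\in I^+(q_0)$ then $\a(0)=0$ yields $p=\b(0)=0$ and the removable singularity $\lim_{p\to 0}p/\sinh p = 1$ gives $d(q)=\sqrt{x^2-y^2}$; if $z=0$ and $q\in\partial\A$ then $x=|y|$, so $\sqrt{x^2-y^2}=0=d(q)$. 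Conversely, $d(q)=\sqrt{x^2-y^2}>0$ forces $p/\sinh p = 1$, hence $p=0$ and $z=0$; while $d(q)=\sqrt{x^2-y^2}=0$ puts $q$ in $\partial\A$ with $x=|y|$, again giving $z=0$ (since on $\partial\A$, $4|z| = x^2-y^2 = 0$).

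The only real subtlety is the cosmetic issue that $\b$ is not literally defined at the endpoints $\pm 1/4$ of its domain, so \eq{dq} on $\partial \A$ must be read as the continuous extension of a real-analytic function; the monotonicity and asymptotics of $\a$ recorded after Theorem \ref{th:Exp-1} make this rigorous, and no new analysis is required beyond what is already in the paper.
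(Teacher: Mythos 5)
Your proof is correct and follows essentially the same route as the paper's: cite Theorems \ref{th:distIf} and \ref{th:distI} on $I^+(q_0)$, cite Theorem \ref{th:optJ} on $\partial\A$, and check that formula \eq{dq} is consistent there via $p \to \infty$, $p/\sinh p \to 0$. Your treatment is in fact slightly more careful than the paper's, since you handle $q = q_0$ (excluded from Theorem \ref{th:optJ}) and the degenerate points $x = |y|$, $z = 0$ where the ratio $z/(x^2-y^2)$ is $0/0$, which the paper glosses over.
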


\begin{remark}\label{rem:psinhp}
In the right-hand side of the first equality in  \eq{dq}, we assume by continuity that $\frac{p}{\sinh p} = 1$ for $p = 0$ and $\frac{p}{\sinh p} = 0$ for $p = \infty$. See the plot of the function $\frac{p}{\sinh p} $ in Fig. $\ref{fig:pSinhp}$.

\figout{
\onefiglabelsizen
{pSinhp}{Plot of $\frac{p}{\sinh p} $}{fig:pSinhp}{2}
}
\end{remark}

\begin{proof}
Let $q \in I^+(q_0)$, then the \sL length maximizers from $q_0$ to $q$ are described in Theorem   \ref{th:distIf} and the expression for $\restr{d}{\tA} = \td$  was obtained in Th. \ref{th:distI}. In particular, if $z = 0$, then $p = 0$ and $d(q) = \sqrt{x^2 - y^2}$, and vice versa.

Let $q \in J^+(q_0)\setminus I^+(q_0)$, then the \sL length maximizers from $q_0$ to $q$ are described in Th. \ref{th:optJ}. Thus $d(q) = 0$, which agrees with \eq{dq} since in this case $\frac{|z|}{x^2-y^2} = \frac 14$, so $p = \infty$.
\end{proof}

We plot restrictions of the \sL distance to several planar domains:
\begin{itemize}
\item
$\restr{d}{z=0}= \sqrt{x^2 - y^2}$ to the domain $J^+(q_0) \cap \{ z = 0 \} = \{ x \geq |y|, \ z = 0\}$, see Fig. \ref{fig:dz=0},
\item
$\restr{d}{y=0}$ to the domain $J^+(q_0) \cap \{ y = 0 \} = \{-x^2/4 \leq z \leq x^2/4, \ y = 0\}$, see Fig. \ref{fig:dy=0},
\item
$\restr{d}{x=1}$ to the domain $J^+(q_0) \cap \{ x = 1 \} = \{y^2 + 4 |z|\leq 1, \ x = 1\}$, see Fig. \ref{fig:dx=1}.
\end{itemize}

\figout{
\twofiglabelsizeh
{dz=0}{Plot of $\restr{d}{z=0}$}{fig:dz=0}{6}
{dy=0}{Plot of $\restr{d}{y=0}$}{fig:dy=0}{10}

\onefiglabelsizen
{dx=1}{Plot of $\restr{d}{x=1}$}{fig:dx=1}{7}
}

The \sL distance has the following regularity properties.

\begin{theorem}\label{th:dreg}
\begin{itemize}
\item[$(1)$]
The function $d(\cdot)$ is continuous on $J^+(q_0)$ and real-analytic on $I^+(q_0)$.
\item[$(2)$]
The function $d(\cdot)$ is not Lipschitz near points $q = (x, y, z)$ with $x = |y|>0$, $z = 0$.
\end{itemize}
\end{theorem}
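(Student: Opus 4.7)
The plan is to exploit the explicit distance formula \eq{dq} directly. For part (1), on $I^+(q_0)$ one has $x^2 - y^2 > 0$ and $z/(x^2-y^2) \in (-1/4, 1/4)$, so $d$ is a composition of real-analytic functions: $\sqrt{\,\cdot\,}$ is real-analytic on the positive reals, $\beta$ is the real-analytic inverse of the real-analytic diffeomorphism $\alpha$ supplied by Theorem~\ref{th:Exp-1}, and $p/\sinh p$ extends to an entire function of $p$ with removable singularity at $p=0$. This yields real-analyticity, hence continuity, on $I^+(q_0)$.

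For continuity at a boundary point $q_1 \in J^+(q_0)\setminus I^+(q_0)$, I would first invoke Theorem~\ref{th:dist}(2) to identify the target value $d(q_1)=0$, and then show $d(q)\to 0$ as $q\to q_1$ with $q\in I^+(q_0)$ by splitting into two regimes. When $q_1$ lies on the smooth part of $\partial\A$, i.e. $x_1^2-y_1^2 = 4|z_1|>0$, the ratio $z/(x^2-y^2)\to\pm 1/4$, forcing $p=\b(z/(x^2-y^2))\to\pm\infty$ and hence $p/\sinh p\to 0$, while $\sqrt{x^2-y^2}$ remains bounded, so $d(q)\to 0$. When $q_1$ lies on an edge ray $\{x=|y|>0,\ z=0\}$ or at $q_0$ itself, both factors in \eq{dq} degenerate simultaneously, but the universal bound $0\le p/\sinh p\le 1$ (see Remark~\ref{rem:psinhp}) yields $d(q)\le\sqrt{x^2-y^2}\to 0$.

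For part (2), I would fix $q^*=(a,a,0)$ with $a>0$ (the case $y=-x$ is symmetric) and construct an explicit sequence violating the Lipschitz property. Set $q_n=(a+1/n,\,a,\,0)$, which lies in $I^+(q_0)$ since $-x^2+y^2+4|z|=-2a/n-1/n^2<0$, and satisfies $\|q_n-q^*\|=1/n$. Since the third coordinate vanishes, Theorem~\ref{th:dist}(1) gives
\[
d(q_n)=\sqrt{(a+1/n)^2-a^2}=\sqrt{2a/n+1/n^2}\sim\sqrt{2a/n} \quad\text{as } n\to\infty,
\]
while $d(q^*)=0$. Hence $|d(q_n)-d(q^*)|/\|q_n-q^*\|\sim\sqrt{2an}\to\infty$, so $d$ admits no Lipschitz constant on any neighborhood of $q^*$.

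The main subtlety I anticipate is continuity along the edge rays: there $z/(x^2-y^2)$ has the indeterminate form $0/0$ in the limit and the two factors in \eq{dq} threaten to misbehave jointly; the uniform bound $p/\sinh p\le 1$ is the crucial observation that closes the argument, and it is precisely the square-root blow-up visible in part (2) that quantifies how badly this bound is saturated.
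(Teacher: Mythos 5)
Your proposal is correct and follows essentially the same route as the paper: part (1) is read off from the representation \eq{dq} (together with the real-analyticity already established for $\td$ in Th.~\ref{th:distI} and the identification $\restr{d}{I^+(q_0)}=\td$), and part (2) exploits that $\restr{d}{z=0}=\sqrt{x^2-y^2}$ fails to be Lipschitz near $x=|y|>0$, which is exactly what your explicit sequence $q_n=(a+1/n,a,0)$ quantifies. The paper's proof is just a two-line version of yours; your added details (the two boundary regimes and the uniform bound $p/\sinh p\le 1$) correctly fill in what the paper leaves implicit.
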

\begin{proof}
(1) 
 follows from representation \eq{dq}.

\medskip
(2) follows from item (1) of Th. \ref{th:dist} since the function $\restr{d}{z = 0} =\sqrt{x^2-y^2}$ is not Lipschitz near points  with $x = |y|>0$.
\end{proof}

\begin{remark}
Item $(1)$ of Th. $\ref{th:dreg}$ improves item $(2)$ of Sec. $\ref{sec:groch}$.
\end{remark}

\begin{remark}
Item $(2)$ of Th. $\ref{th:dreg}$ is visualized in Fig. $\ref{fig:dz=0}$ since the cone given by the plot of $\restr{d}{z = 0} = \sqrt{x^2-y^2}$ has vertical tangent planes at points $x = |y|> 0$.

Moreover, item $(2)$ of Th. $\ref{th:dreg}$ can be essentially detailed by a precise description of the asymptotics of the \sL distance $d(q)$ as $q \to \partial \A$, this will be done in a forthcoming paper {\em \cite{pop_sach}}.
\end{remark}

\begin{remark}
The \sL distance $\map{d}{J^+(q_0)}{[0, + \infty)}$ is not uniformly continuous since the same holds for its restriction $\restr{d}{z = 0} = \sqrt{x^2-y^2}$ on the angle $\{x \geq |y|\}$.
\end{remark}

As was shown in \cite{groch6}, the \sL distance $d(q)$  admits a lower bound by the function $\sqrt{x^2 - y^2 - 4 |z|}$ and does not admit an upper bound by this function multiplied by any constant (see item (4) in Sec. \ref{sec:groch}).   Here we precise this statement and prove another upper bound.

\begin{theorem}
\begin{itemize}
\item[$(1)$]
The ratio $\dfrac{\sqrt{x^2 - y^2 - 4 |z|}}{d(q)}$ takes any values in the segment $[0, 1]$ for $q =(x,y,z) \in J^+(q_0)$. 
\item[$(2)$]
For any  $q = (x, y, z) \in J^+(q_0)$ there holds the bound $d(q) \leq \sqrt{x^2-y^2}$,
moreover, the ratio $\dfrac{d(q)}{\sqrt{x^2-y^2}}$ takes any values in the segment $[0, 1]$.
\end{itemize}
\end{theorem}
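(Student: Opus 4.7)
Both items follow from Theorem~\ref{th:dist}, which on $I^+(q_0)$ gives
\[
d(q) = \sqrt{x^2-y^2}\,\frac{p}{\sinh p}, \qquad p = \b\left(\frac{z}{x^2-y^2}\right) \in \R.
\]
Since $\b$ is a diffeomorphism from $(-1/4, 1/4)$ onto $\R$, as $z/(x^2-y^2)$ varies in $(-1/4, 1/4)$ the parameter $p$ ranges over all of $\R$. The whole proof reduces to understanding elementary functions of $p$.

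\textbf{Item (2).} The function $p \mapsto p/\sinh p$ is even, continuous, equals $1$ at $p=0$, is strictly decreasing in $|p|$, and tends to $0$ as $|p|\to\infty$; its range on $\R$ is $(0,1]$. This yields both the bound $d(q) \leq \sqrt{x^2-y^2}$ and the surjection of the ratio $d(q)/\sqrt{x^2-y^2}$ onto $(0,1]$ as $q$ varies in $I^+(q_0)$. The value $0$ is attained at any $q \in \partial \A \setminus \{q_0\}$ with $x > |y|$ (for instance $q = (1,0,1/4)$), where $d(q) = 0$ by Theorem~\ref{th:dist}(2) while $\sqrt{x^2-y^2} > 0$. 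Altogether the ratio takes every value in $[0,1]$.

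\textbf{Item (1).} Grochowski's lower bound (item~(4) of Section~\ref{sec:groch}) gives $\sqrt{x^2-y^2-4|z|}/d(q) \leq 1$, so the ratio already lies in $[0,1]$. To exhibit surjectivity, I reduce the ratio to an elementary function of $p$. Writing $|z|/(x^2-y^2) = |\a(p)|$ with $8|\a(p)|\sinh^2 p = \sinh 2|p| - 2|p|$, and using the identity $2\sinh r(\sinh r - \cosh r) = -(1-e^{-2r})$ at $r = |p|$, one obtains
\[
\frac{x^2-y^2-4|z|}{d(q)^2} \;=\; \frac{(1-4|\a(p)|)\sinh^2 p}{p^2} \;=\; \frac{2|p|-1+e^{-2|p|}}{2p^2} \;=:\; F(|p|).
\]
A Taylor expansion at $0$ gives $F(0)=1$ (by continuous extension), and $F(|p|) \to 0$ as $|p|\to\infty$. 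Along the family $q_z = (1,0,z)$, $z \in [0, 1/4]$, the parameter $|p| = \b(z)$ varies continuously from $0$ to $+\infty$, so by the intermediate value theorem $\sqrt{F(|p|)}$ attains every value in $(0,1]$, while the value $0$ is reached at the boundary point $q_{1/4}=(1,0,1/4)$ (as the continuous limit of $\sqrt{F}$).

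\textbf{Main obstacle.} The only non-routine step is the algebraic reduction of $(1-4|\a(p)|)\sinh^2 p/p^2$ to the clean form $(2|p|-1+e^{-2|p|})/(2p^2)$; everything else is continuity plus IVT. As a consistency check, the inequality $F \leq 1$ is equivalent to $h(r) := 2r^2 - 2r + 1 - e^{-2r} \geq 0$ on $[0,\infty)$, which follows at once from $h(0) = h'(0) = 0$ and $h''(r) = 4(1-e^{-2r}) \geq 0$; this also gives an independent proof of Grochowski's lower bound.
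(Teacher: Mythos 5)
Your proof is correct and takes essentially the same route as the paper: both items are reduced via the distance formula \eq{dq} to the range of an elementary function of $p$, and your expression $\frac{2|p|-1+e^{-2|p|}}{2p^2}$ is algebraically identical (via $\sinh^2 p - \sinh p \cosh p = \frac{e^{-2p}-1}{2}$) to the paper's $\frac{\sinh^2 p - \sinh p\cosh p + p}{p^2}$. The only difference is that the paper merely asserts this function has range $[0,1]$, whereas you actually verify it (the convexity argument for $h$ plus the intermediate value theorem), and your treatment of the value $0$ as a boundary limit matches the convention the paper adopts implicitly in Remark \ref{rem:psinhp}.
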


The two-sided bound
\be{bound}
{\sqrt{x^2 - y^2 - 4 |z|}} \leq {d(q)} \leq \sqrt{x^2-y^2}, \qquad q \in J^+(q_0),
\ee
is visualized in Fig. \ref{fig:dbound}, which shows plots of the surfaces (from below to top):
$$
\sqrt{x^2-y^2} = 1, \qquad  {d(q)} = 1, \qquad    {\sqrt{x^2 - y^2 - 4 |z|}} = 1, \qquad q \in J^+(q_0).
$$

\figout{
\onefiglabelsizen{dbound}{Bound \eq{bound}}{fig:dbound}{6}
}

\begin{proof}
$(1)$
It follows from \eq{dq} that
$$
\frac{x^2 - y^2 - 4 |z|}{d^2(q)} = \frac{\sinh^2 p - \sinh p \cosh p + p}{p^2},
$$
and the function in the right-hand side takes all values in the segment $[0, 1]$ for $q \in J^+(q_0)$.

\medskip
$(2)$
It follows from \eq{dq} that $\frac{d(q)}{\sqrt{x^2-y^2}} = \frac{p}{\sinh p}$. When $q \in J^+(q_0)$, the ratio $\frac{p}{\sinh p}$ takes all values in the segment $[0, 1]$, see Remark \ref{rem:psinhp} after Th. \ref{th:dist}.
\end{proof}

\section{Symmetries}
\begin{theorem}\label{th:sym}
\begin{itemize}
\item[$(1)$]
The hyperbolic rotations $X_0 = y \pder{}{x} + x \pder{}{y}$ and reflections
$\eps^1 \ : \ (x, y, z) \mapsto (x, - y, z)$, $\eps^2 \ : \ (x, y, z) \mapsto (x, y, -z)$  preserve  $d(\cdot)$.
\item[$(2)$]
The dilations $Y = x \pder{}{x} + y \pder{}{y} + 2z \pder{}{z}$ stretch $d(\cdot)$: 
$$
d(e^{sY}(q)) = e^s d(q), \qquad s \in \R, \quad q \in J^+(q_0).
$$
\end{itemize} 
\end{theorem}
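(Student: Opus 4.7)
The approach is to exploit the closed-form expression from Theorem~\ref{th:dist},
\begin{equation*}
d(q) = \sqrt{x^2-y^2}\cdot\frac{p}{\sinh p}, \qquad p = \b\!\left(\frac{z}{x^2-y^2}\right),
\end{equation*}
and reduce every assertion to tracking the two scalar quantities $x^2-y^2$ and $z/(x^2-y^2)$ under the corresponding flow, after verifying that $J^+(q_0)$ is preserved.

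For (1), I would integrate $X_0 = y\,\pder{}{x} + x\,\pder{}{y}$ to obtain the hyperbolic rotation
\begin{equation*}
e^{sX_0}(x,y,z) = (x\cosh s + y\sinh s,\ x\sinh s + y\cosh s,\ z),
\end{equation*}
which preserves $x^2-y^2$, fixes $z$, and hence preserves $J^+(q_0)$ defined by $-x^2+y^2+4|z|\le 0$, $x\ge 0$. Both arguments in the formula for $d$ are unchanged, so $d\circ e^{sX_0}=d$. The reflection $\eps^1$ trivially preserves $x^2-y^2$, $z$ and $J^+(q_0)$.

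The one delicate point is $\eps^2:(x,y,z)\mapsto(x,y,-z)$, under which $z/(x^2-y^2)$ changes sign while $x^2-y^2$ and $J^+(q_0)$ are preserved. Here I would verify that $\b$ is odd, which follows from the oddness of $\a$:
\begin{equation*}
\a(-p) = \frac{\sinh(-2p)-2(-p)}{8\sinh^2(-p)} = -\a(p).
\end{equation*}
Therefore $p\mapsto -p$ under $\eps^2$, and since $p/\sinh p$ is even, $d\circ\eps^2=d$.

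For (2), integrating $Y$ yields $e^{sY}(x,y,z) = (e^sx,\,e^sy,\,e^{2s}z)$, which clearly preserves $J^+(q_0)$. Then $x^2-y^2 \mapsto e^{2s}(x^2-y^2)$ while $z/(x^2-y^2)$ is preserved, so $p$ is unchanged and $\sqrt{x^2-y^2}$ acquires a factor $e^s$, giving $d(e^{sY}(q)) = e^s d(q)$. No real obstacle is anticipated: the whole theorem is a direct substitution into the formula of Theorem~\ref{th:dist}, with the single one-line check of the oddness of $\a$ being needed to handle $\eps^2$.
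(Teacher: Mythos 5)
Your proof is correct, and there is no circularity: Theorem \ref{th:dist} is proved before and independently of this statement, the three flows do preserve $J^+(q_0)$, and the two quantities $x^2-y^2$ and $z/(x^2-y^2)$ transform exactly as you claim; the oddness of $\a$ (hence of $\b=\a^{-1}$) together with the evenness of $p/\sinh p$ is precisely what is needed for $\eps^2$.

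The paper, however, takes a partly different route. For the reflections it argues just as you do (it declares the invariance ``obvious'' from \eq{dq}, so your oddness check actually supplies a detail the paper omits). But for $X_0$ and $Y$ it does not substitute into the distance formula; instead it proves equivariance of the exponential mapping,
$$
e^{sX_0}\circ\Exp(\psi,c,t) = \Exp(\psi+s,c,t), \qquad
e^{sY}\circ\Exp(\psi,c,t) = \Exp(\psi, ce^{-2s}, te^{s}),
$$
and uses the fact that on $I^+(q_0)$ the distance is the time component of $\Exp^{-1}$ (Theorems \ref{th:Exp-1}, \ref{th:optI}, \ref{th:distIf}), treating the boundary $\partial\A$, where $d\equiv 0$, separately via its invariance under the flows. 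What each approach buys: yours is a uniform one-step substitution valid on all of $J^+(q_0)$ (given the conventions of Remark \ref{rem:psinhp}) and needs only the closed-form distance; the paper's equivariance is structurally stronger, since it shows the symmetries permute the maximizers themselves --- information not recoverable from the distance formula alone --- and would survive even without an explicit formula for $d$. One small point to make explicit in your write-up: on the ray $\{x=|y|>0,\ z=0\}\subset\partial\A$ the ratio $z/(x^2-y^2)$ is indeterminate, so there you should either invoke item $(2)$ of Theorem \ref{th:dist} ($d\equiv 0$ on $J^+(q_0)\setminus I^+(q_0)$, a set preserved by all three flows, which is how the paper handles the whole boundary) or note that $\sqrt{x^2-y^2}=0$ forces $d=0$; this is cosmetic and does not affect correctness.
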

\begin{proof}
(1) The flow of the hyperbolic rotations
$$
e^{s X_0} \ : \ (x, y, z) \mapsto (x \cosh s + y \sinh s, x \sinh s + y \cosh s, z), \qquad
 s \in \R, \quad (x,y,z) \in M,
$$
preserves the exponential mapping:
$$
e^{sX_0} \circ \Exp(\psi, c, t) = \Exp(\psi + s, c, t), \qquad (\psi, c, t) \in N, \quad s \in \R,
$$
thus $d(e^{sX_0}(q)) = d(q)$ for $q \in I^+(q_0)$. Moreover, the flow $e^{sX_0}$ preserves the boundary $\partial \A = J^+(q_0) \setminus I^+(q_0)$, thus $d(e^{sX_0}(q)) = d(q) = 0$ for $q \in J^+(q_0) \setminus I^+(q_0)$.

Further, it is obvious from \eq{dq} that the reflections $\eps^1$, $\eps^2$ preserve $d(\cdot)$.

\medskip

(2) The flow of the dilations
$$
e^{s Y} \ : \ (x, y, z) \mapsto (x e^s, ye^s, ze^{2s}), \qquad
 s \in \R, \quad (x,y,z) \in M,
$$
acts on  the exponential mapping as follows:
$$
e^{sY} \circ \Exp(\psi, c, t) = \Exp(\psi, ce^{-2s}, te^s), \qquad (\psi, c, t) \in N, \quad s \in \R,
$$
thus $d(e^{sY}(q)) = e^s d(q)$ for $q \in I^+(q_0)$. The equality $d(e^{sY}(q)) = e^s d(q) = 0$ for $q \in J^+(q_0) \setminus I^+(q_0)$ follows since the flow $e^{sY}$ preserves the boundary $\partial \A = J^+(q_0) \setminus I^+(q_0)$.
\end{proof}

\section{\SL spheres}
\subsection{Spheres of positive radius}
\SL spheres 
$$
S(R) = \{ q \in M \mid d(q) = R\}, \qquad R> 0,
$$
are transformed one into another by dilations:
$$
S(e^s R) = e^{sY}(S(R)), \qquad s \in \R,
$$
thus we describe the unit sphere 
\be{S1}
S = S(1) = \{\Exp(\lam, 1) \mid \lam \in C\}.
\ee

\begin{theorem}
\begin{itemize}
\item[$(1)$]
The unit \sL sphere  $S$  is a regular real-analytic manifold diffeomorphic to   $\R^2$.
\item[$(2)$]
Let  $q = \Exp(\psi, c, 1) \in S$, $(\psi, c) \in C$,  then the tangent space
\be{TqS}
T_qS = \left\{v = \sum_{i=1}^3 v_i X_i(q) \mid - v_1 \cosh(\psi + c)+v_2 \sinh(\psi+c)+v_3c=0\right\}.
\ee
\item[$(3)$]
$S$  is the graph of the function $x = \sqrt{y^2 + f(z)}$, where $f(z) = e \circ k(z)$, $e(w) = \frac{\sinh^2 w}{w^2}$, $k(z) = b(z)/2$, $b = a^{-1}$, $a(c) = \frac{\sinh c - c}{2c^2}$.
\item[$(4)$]
The function $f(z)$  is real-analytic, even, strictly convex, unboundedly and strictly increasing for   $z \geq 0$. This function has a Taylor decomposition   $f(z) = 1 + 12 z^2 + O(z^4)$ as   $z \to 0$ and 
an asymptote $4|z|$ as $z \to \infty$:
\be{fas}
\lim_{z \to \infty} (f(z) - 4 |z|) = 0.
\ee
\item[$(5)$]
The function $f(z)$ satisfies the bounds
\be{fbound}
4 |z| < f(z) < 4 |z| + 1, \qquad z \neq 0. 
\ee
\item[$(6)$]
A section of  the sphere $S$ by  a plane $\{z = \const\}$  is a branch of the hyperbola $x^2-y^2 = f(z)$, $x>0$.
A section of  the sphere   $S$ by  a plane  $\{x = \const>1 \}$  is a strictly convex curve   $y^2+f(z) = x^2$ diffeomorphic to  $S^1$.
\item[$(7)$]
The \sL distance from the point $q_0$ to a point 
  $q = (x, y, z) \in \tA$ may be expressed as  
$d(q) = R$, where $x^2-y^2 = R^2 f(z/R^2)$.
\item[$(8)$]
The \sL ball $B = \{ q \in M \mid d(q) \leq 1\}$  has infinite volume in the coordinates   $x, y, z$.
\end{itemize} 
\end{theorem}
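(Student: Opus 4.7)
The plan is to parametrize $S$ by $C \simeq \R^2_{\psi, c}$ via $(\psi, c) \mapsto \Exp(\psi, c, 1)$ and deduce everything from the explicit formulas \eq{qc=0}, \eq{qcn0}, the distance formula \eq{dq}, and the dilation symmetries of Th.~\ref{th:sym}. Item $(1)$ is immediate from restricting the real-analytic diffeomorphism of Th.~\ref{th:Exp-1} to the slice $\{t = 1\}$. For item $(2)$, since $d \equiv 1$ on $S$ one has $T_qS = \ker dd_q$, and the standard optimal-control fact that at the endpoint of a strictly normal extremal the differential of the time-maximal value function agrees with $-\lam_1$; the components of $\lam_1$ in the dual basis to $X_1, X_2, X_3$ are $(-\cosh(\psi+c), \sinh(\psi+c), c)$ by the Hamiltonian ODE of Sec.~\ref{sec:PMP}, which yields \eq{TqS}. (A direct check using $\pder{}{x} = X_1 + \tfrac y2 X_3$, $\pder{}{y} = X_2 - \tfrac x2 X_3$, $\pder{}{z} = X_3$ and the identity $\cosh(\psi+c)\cosh\psi - \sinh(\psi+c)\sinh\psi = \cosh c$ also works.) Item $(3)$ comes from setting $t = 1$ in \eq{xyznew}: then $p = c/2$, $\tau = \psi + c/2$, so $x^2 - y^2 = (4/c^2)\sinh^2(c/2) = e(c/2)$ and $z = (\sinh c - c)/(2c^2) = a(c)$; inverting gives $c = b(z)$ and hence $x^2 - y^2 = e(k(z)) = f(z)$.

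For item $(4)$ I would first establish that $a$ is a real-analytic diffeomorphism $\R \to \R$: its derivative equals $(c\cosh c + c - 2\sinh c)/(2c^3)$, and both the numerator and its first derivative vanish at $0$ while its second derivative $c\sinh c$ is positive for $c > 0$; Taylor gives $a'(0) = 1/12$. Analyticity and evenness of $f$ then follow from composition with $e$ (analytic, even) and oddness of $b$ inherited from $a$. Strict monotonicity on $z \geq 0$ reduces similarly to positivity of $(c\sinh c - 2\cosh c + 2)/c^3$ for $c > 0$ by the same ``two-derivatives-vanish-at-$0$, third is positive'' scheme. The Taylor expansion $f(z) = 1 + 12 z^2 + O(z^4)$ uses $e(w) = 1 + w^2/3 + O(w^4)$ and $k(z) = 6z + O(z^3)$ from $b'(0) = 12$, and the asymptote follows from the identity $f(z) - 4z = (2e^{-c} + 2c - 2)/c^2 \to 0$ as $c \to \infty$. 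The main obstacle is strict convexity: the parametric formula $f_{zz} = (f_{cc} z_c - f_c z_{cc})/z_c^3$ has positive denominator by monotonicity of $a$, and a direct computation simplifies the numerator to $R(c)/c^6$, where $R(c) := c^2(1+\cosh c) - 4c\sinh c + 4\cosh c - 4$. Since $R(0) = 0$, it is enough to verify $R'(c) = c(2 - 2\cosh c + c\sinh c) > 0$ for $c > 0$, and the bracketed factor succumbs to the same three-step scheme (vanishes at $0$; derivative $c\cosh c - \sinh c$ vanishes at $0$; second derivative $c\sinh c > 0$).

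Item $(5)$ is a chain of similar inequalities: using the identity for $f - 4z$ above, the lower bound reduces to $e^{-c} + c - 1 > 0$ and the upper bound to $c^2 - 2c + 2 - 2e^{-c} > 0$, each dispatched by one or two differentiations. Item $(6)$ follows from $(3)$--$(5)$: since $f(z) > 0$, sections $\{z = \const\} \cap S$ are branches of hyperbolas; for $x > 1$, the set $\{y^2 + f(z) \leq x^2\}$ is the sublevel set of a strictly convex coercive function on $\R^2$, hence a strictly convex compact disk with $S^1$ boundary. Item $(7)$ follows immediately from Th.~\ref{th:sym}$(2)$: applying $e^{-(\ln R) Y}$ sends the point $q$ of \sL distance $R$ to a point on $S$, whose coordinates $(x/R, y/R, z/R^2)$ must satisfy the equation of $S$ from item $(3)$. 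Finally, for item $(8)$, the bound $d(q) \leq \sqrt{x^2-y^2}$ from \eq{bound} gives $B \supset \{q \in J^+(q_0) : x^2 - y^2 \leq 1\}$; changing to $u = x-y$, $v = x+y$ and integrating $z$ over $[-uv/4, uv/4]$, the integral $\tfrac14 \int\!\!\int_{uv \leq 1,\; u,v \geq 0} uv\,du\,dv$ diverges logarithmically, so $B$ has infinite volume.
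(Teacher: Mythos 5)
Your proposal is correct, and I checked the computations that carry the real weight: writing $z = a(c)$, $f = 2(\cosh c - 1)/c^2$, the numerator of the parametric second derivative does reduce to $R(c)/c^6$ with $R(c) = c^2(1+\cosh c) - 4c\sinh c + 4\cosh c - 4$ and $R'(c) = c\,(2 - 2\cosh c + c\sinh c)$, so your convexity argument goes through. Your skeleton agrees with the paper's (parametrization of $S$ by $\Exp(\cdot,\cdot,1)$, tangent space as the annihilator of $\lam_1 = e^{\vH}(\lam_0)$, dilations for item $(7)$), but several items are handled along genuinely different routes. In item $(3)$ the paper does not substitute $t=1$ into \eq{xyznew}: it first shows that $S$ is a graph $x = F(y,z)$, uses invariance under the hyperbolic rotations $e^{sX_0}$ to derive the PDE $F\,\partial F/\partial y - y = 0$, integrates it to get $F = \sqrt{y^2+f(z)}$ with $f$ real-analytic, and only then identifies $f$; your direct substitution is more elementary and proves the same thing. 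In item $(4)$ the paper dismisses convexity with ``similarly it follows that $f''>0$'' after checking $e'>0$, $k'>0$ --- but that gloss hides a genuine difficulty: $a$ is convex on $c>0$, so $k = b/2$ is \emph{concave} there, hence $f'' = e''(k')^2 + e'k''$ has a negative second term and no naive composition argument works; your parametric computation of $f_{zz}$ supplies exactly the argument the paper omits, and is the strongest part of the proposal. In item $(5)$ the paper deduces the bounds from convexity, the asymptote and $f(0)=1$, whereas you verify them directly from $f - 4z = (2e^{-c}+2c-2)/c^2$; in item $(8)$ the paper integrates the gap between the sphere and the beak, $\int\!\!\int\bigl(\sqrt{y^2+f(z)}-\sqrt{y^2+4|z|}\bigr)\,dy\,dz = +\infty$, while you inscribe $\{x^2-y^2\le 1\}\cap J^+(q_0)$ into $B$ via \eq{bound}, which avoids $f$ entirely; both are sound. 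One inconsequential slip: in proving $a'>0$, the numerator is $c\cosh c + c - 2\sinh c$, whose second derivative is $c\cosh c$, not $c\sinh c$ (the latter is the second derivative of the brackets appearing in your monotonicity and convexity steps); since both are positive for $c>0$, the conclusion is unaffected.
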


See in Fig. \ref{fig:plotS} a plot of the sphere $S$ (above in red) and the Heisenberg beak $\partial \A$ (at the bottom in blue). Different \sL length maximizers connecting $q_0$ and $S$ are shown in Fig. \ref{fig:Sopt}. A plot of the function $f(z)$ illustrating bound \eq{fbound} is shown in Fig. \ref{fig:f}. Sections of the sphere $S$ by the planes $\{x = 1, 2, 3\}$ are shown in Fig. \ref{fig:Sx}.

\figout{
\twofiglabelsizeh
{plotS}{The sphere $S$ and the Heisenberg beak $\partial \A$}{fig:plotS}{6}
{Sopt}{Maximizers connecting $q_0$ and $S$}{fig:Sopt}{6}

\twofiglabelsizeh
{f}{Plot of $f(z)$ and bound \eq{fbound}}{fig:f}{5}
{Sx}{Sections of $S$ by the planes $\{x = 1, 2, 3\}$}{fig:Sx}{5}
}

\begin{proof}
$(1)$
Since $\map{\Exp}{C \times \R_+}{\tA}$ is a diffeomorphism, the parametrization \eq{S1} of the sphere $S$ implies that it is a smooth 2-dimensional manifold diffeomorphic to $\R^2$. Moreover, the exponential mapping is real-analytic, thus $S$ is real-analytic as well.

\medskip
$(2)$
Let
$q = \Exp(\lam_0, 1) \in S$, $\lam_0 = (\psi, c, q_0 ) \in C$, and let $\lam_1 = e^{\vec H}(\lam_0)$. Then
\be{TqS1}
T_qS = \lam_1^{\perp} = \{v \in T_q M \mid \langle \lam_1, v \rangle = 0\}.
\ee
Since $h_1(\lam_1) = - \cosh(\psi+c)$, $h_2(\lam_1) = \sinh(\psi+c)$, $h_3(\lam_1) = c$, representation \eq{TqS} follows from \eq{TqS1}.

\medskip
$(3)$
It follows from \eq{TqS} that the 2-dimensional manifold $S$ projects regularly to the coordinate plane $(y, z)$, thus it is a graph of a real-analytic function $x = F(y, z)$. Since $e^{tX_0}(S) = S$, $t \in \R$, then 
$$
0 = \restr{X_0(F(y,z)-x)}{S} = F(y, z) \pder{F}{y}(y,z) - y.
$$ 
Integrating this differential equation, we get $F(y, z) = \sqrt{y^2 + f(z)}$ for a real-analytic function $f(z)$.

Since $S \cap \{ z = 0\} = \left\{x =  \sqrt{y^2 + 1}, \  z = 0\right\}$, then $f(0) = 1$.

Let $z \neq 0$. Then $z = \frac{\sinh c - c}{2c^2} = a(c)$ by virtue of \eq{qcn0}. The function $\map{a}{\R}{\R}$ is a diffeomorphism, denote the inverse function $b = a^{-1}$. 
By virtue of \eq{x2-y2}, we have
$f(z) = x^2 - y^2 = \frac{4}{c^2} \sinh^2 p$, whence $f(a(c)) = \frac{4}{c^2} \sinh^2 p$, thus $f(a) = e(\frac b2(a))$, where $e(x) = \frac{\sinh^2 x}{x^2}$. Item (3) follows.

\medskip
$(4)$
We have already proved that $f(z)$ is real-analytic. Since $\eps^1(S) = S$, then $f$ is even. Immediate computation shows that $k'(z) > 0$, $z > 0$, and $e'(x)> 0$, $x > 0$, whence $f'(z) > 0$, $z > 0$. Similarly it follows that $f''(z) > 0$ for $z > 0$. By virtue of the expansions $k(z) = 6 z + O(z^2)$, $z \to 0$ and $e(x) = 1 + \frac{x^2}{3} + O(x^4)$, $x \to 0$, we get $f(z) = 1 + 12 z^2 + O(z^4)$, $z \to 0$. Finally, it easily follows from the definition of the function $f(z)$ that $\lim_{z \to \infty}(f(z) - 4 |z|) = 0$.

\medskip
$(5)$
follows from (4).

\medskip
$(6)$
It is straightforward that $S \cap \{z = \const\} = \{x^2 - y^2 = f(z), \ x > 0, \ z = \const\}$ is a branch of a hyperbola. 

The section $S \cap \{x = \const > 1\} = \{y^2 + f(z) = x^2, \ x = \const > 1\}$ is a smooth compact curve, thus diffeomorphic to $S^1$. If $y \geq 0$, then this curve is given by the equation $y = \sqrt{x^2 - f(z)}$, which is a strictly concave function (this follows by twice differentiation).

\medskip
$(7)$
Take any point $q = (x,y,z) \in \tA$, then there exists $s \in \R$ such that $e^{-s Y} (q) \in S$, i.e., $d(q) = e^s$, see item (2) of Th. \ref{th:sym}. Denoting $R = e^s$, we get $\frac xR = \sqrt{\frac{y^2}{R^2} + f\left(\frac{z}{R^2}\right)}$, and item (7) of this theorem follows.

\medskip
$(8)$
The unit ball is given explicitly by
$$
B = \left\{(x, y, z) \in \R^3 \mid \sqrt{y^2 + 4 |z|} \leq x \leq \sqrt{y^2 + f(z)}\right\},
$$ 
thus its volume is evaluated by the integral
$$
V(B) = \int_{-\infty}^{+ \infty} dy \int_{-\infty}^{+ \infty} dz \left( \sqrt{y^2 + f(z)} - \sqrt{y^2 + 4 |z|}\right) = + \infty.
$$
\end{proof}

\begin{remark}
Thanks to bound \eq{fbound} of the function $f(z)$, the sphere $S = \left\{ x = \sqrt{y^2 + f(z)}\right\}$ is contained in the domain
$$
\left\{q = (x, y, z) \in M \mid \sqrt{y^2 + 4 |z|} < x \leq \sqrt{y^2+ 4 |z| + 1}\right\}.
$$ 
The bounding functions of this domain provide an approximation of the function $\sqrt{y^2 + f(z)}$ defining  $S$ up to the accuracy 
$$
\sqrt{y^2+ 4 |z| + 1} - \sqrt{y^2 + 4 |z|} 
= 
\frac{1}{\sqrt{y^2+ 4 |z| + 1} + \sqrt{y^2 + 4 |z|} }  
\leq \min\left(1, \frac{2}{|y|}, \frac{1}{\sqrt{|z|}}\right).
$$
\end{remark}

\subsection{Sphere of zero radius}
Now consider the zero radius sphere
$$
S(0) = \{q \in M \mid d(q) = 0\}.
$$

\begin{theorem}\label{th:S0}
\begin{itemize}
\item[$(1)$]
$S(0) = J^+(q_0) \setminus I^+(q_0) = \partial J^+(q_0) =  \partial I^+(q_0) = \partial \A$.
\item[$(2)$]
$S(0)$ is the graph of a continuous function $x = \Phi(y, z) := \sqrt{y^2 + 4 |z|}$, thus a $2$-dimensional topological manifold.
\item[$(3)$]
The function $\Phi(y, z)$ is even in $y$ and $z$, real-analytic for $z \neq 0$, Lipschitz near $z = 0$, $y \neq 0$, and H\"older with constant $\frac 12$, non-Lipschitz near $(y, z)= (0, 0)$. 
\item[$(4)$]
$S(0)$ is filled by broken lightlike trajectories with one or two edges described in  Th. $\ref{th:optJ}$, and is parametrized by them as follows:
\begin{multline*}
S(0) = \left\{ e^{\tau_2 (X_1-X_2)} e^{\tau_1(X_1+X_2)} = (\tau_1 + \tau_2, \tau_1-\tau_2, -\tau_1 \tau_2) \mid \tau_i \geq 0 \right\} \\
\cup
\left\{ e^{\tau_2 (X_1+X_2)} e^{\tau_1(X_1-X_2)} = (\tau_1 + \tau_2, \tau_2-\tau_1, \tau_1 \tau_2) \mid \tau_i \geq 0 \right\}.
\end{multline*}
\item[$(5)$]
The flows of the vector fields $Y, X_0$ preserve $S(0)$.
 Moreover, the symmetries $Y$, $X_0$ provide a regular parametrization of 
\begin{align}
S(0) \cap \{\sgn z = \pm 1\} &= \left\{e^{sY} \circ e^{rX_0} (q_{\pm}) \mid r, s > 0\right\},  \label{S0par1} 
\end{align}
where $q_{\pm} = (x_{\pm}, y_{\pm}, z_{\pm})$ is any point in $S(0) \cap \{\sgn z = \pm 1\}$.
\item[$(6)$]
The sphere $S(0) = \left\{ 16z^2 = (x^2-y^2)^2, \ x^2 - y^2 \geq 0, \ x \geq 0\right\}$ is a semi-algebraic set. 
\item[$(7)$]
The zero-radius sphere is a Whitney stratified set with the stratification
\begin{multline*}
S(0) = \big(S(0) \cap \{z > 0\} \big) \cup \big(S(0) \cap \{z < 0\}\big) \\
\cup \big(S(0) \cap \{z = 0, \ y > 0\}\big) \cup \big(S(0) \cap \{z = 0, \ y < 0\}\big) \cup \{q_0\}.
\end{multline*} 
\item[$(8)$]
Intersection of the sphere $S(0)$ with a plane $\{z = \const \neq 0 \}$ is a branch of a hyperbola $\{x^2-y^2 = 4 |z|, \ x > 0, z = \const\}$, intersection with a plane $\{z = 0 \}$ is an angle   $\{  x = |y|, z = 0\}$, intersection with a plane $\{y = k x \}$, $k \in (-1, 1)$, is a union of two half-parabolas $\{4z = \pm(1-k^2)x^2, \ x \geq 0, \ y = kx\}$, and intersection with a plane $\{y = \pm x \}$ is a ray $\{y = \pm x, \ z = 0\}$.
\end{itemize}
\end{theorem}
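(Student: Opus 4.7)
My plan groups the items by the tool they invoke. Items (1), (2), (6) and (8) reduce to direct algebra using Th.~\ref{th:dist}(2) and \eq{Jq0}. The equivalence $d(q)=0 \iff q\in J^+(q_0)\setminus I^+(q_0)$ combined with the fact that $I^+(q_0)$ is open with closure $J^+(q_0)$, and with $\A=I^+(q_0)\cup\{q_0\}$, yields all four identifications in (1). The defining relation $-x^2+y^2+4|z|=0$, $x\geq 0$ is simultaneously the graph form $x=\Phi(y,z)=\sqrt{y^2+4|z|}$ of (2) and, after squaring, the semi-algebraic form $16z^2=(x^2-y^2)^2$ with $x^2-y^2\geq 0$, $x\geq 0$ of (6); item (8) is obtained by intersecting this graph with the listed affine planes.

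Item (3) is an elementary analysis of $\Phi$. Evenness is immediate. On $\{z\neq 0\}$ the argument $y^2+4|z|$ is real-analytic and strictly positive, so $\sqrt{\cdot}$ is real-analytic. Near $(y_0,0)$ with $y_0\neq 0$ the argument is bounded below by $y_0^2/2$ on a neighborhood, giving smoothness and local Lipschitzness. Near the origin, $\Phi(0,z)=2\sqrt{|z|}$ is sharply H\"older of order $\tfrac12$ and not Lipschitz; the global H\"older-$\tfrac12$ bound follows from $|\sqrt a-\sqrt b|\leq\sqrt{|a-b|}$ applied to $a=y^2+4|z|$, $b=y'^2+4|z'|$ together with local Lipschitzness of this argument.

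Items (4) and (5) invoke Theorems \ref{th:optJ} and \ref{th:sym}. By (1), $S(0)=\partial\A$, and Th.~\ref{th:optJ} says each such endpoint is attained by a broken lightlike trajectory of the stated form, with the sign of $z_1$ selecting which of the two concatenations occurs. The explicit endpoint coordinates in (4) are a one-line Heisenberg group calculation: $e^{t(X_1\pm X_2)}$ applied to $\Id$ is $(t,\pm t,0)$, and the product rule of Sec.~3 gives the stated endpoints $(\tau_1+\tau_2,\pm(\tau_1-\tau_2),\mp\tau_1\tau_2)$. For (5), Th.~\ref{th:sym} already gives that $e^{sY}$ and $e^{rX_0}$ preserve $\{d=0\}=S(0)$. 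I would then verify $[Y,X_0]=0$ directly in coordinates so the flows commute. Regularity of the parametrization follows from linear independence of $Y(q)$, $X_0(q)$ on $\{z\neq 0\}$ (the vanishing of the third component of $X_0$ and nonvanishing of that of $Y$ when $z\neq 0$), and bijectivity onto $S(0)\cap\{\sgn z=\pm 1\}$ by solving first $e^{2s}=z/z_\pm$ and then matching the hyperbolic angle $\artanh(y/x)$.

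Item (7) is the main technical point. The strata are the two open 2-dimensional real-analytic surfaces $S(0)\cap\{\pm z>0\}$ (smoothness from (3)), the two open rays $\{x=\pm y>0,\,z=0\}$, and the singleton $\{q_0\}$. Whitney condition (a) follows from (b), so I focus on (b) at each adjacency. The symmetries of (5) are the key simplification: the conical invariance under $e^{sY}$ with weights $(1,1,2)$ reduces (b) at $q_0$ to a single limit computation along a dilation orbit $p_n=e^{s_nY}(p_0)$, $s_n\to-\infty$, where the weighted scaling makes limit tangent planes of the 2-strata horizontal (normals collapse into $\pm\partial_z$) while limit secant directions lie in the $(x,y)$-plane, establishing (b). The rotation symmetry $e^{rX_0}$ reduces verification along each ray to a single representative point $(t_0,t_0,0)$, where a direct expansion of $\Phi$ gives the secant decomposition $(y_n-t_0)(1,1,0)+z_n(2/t_0,0,1)$ lying in the limit tangent plane spanned by $\partial_y+\partial_x$ and $\partial_z+(2/t_0)\partial_x$. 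The main obstacle is the bookkeeping of five strata meeting at $q_0$, but the conical-plus-rotational symmetry collapses every check to finitely many pointwise computations.
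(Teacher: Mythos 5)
Your proposal is correct and, where the paper actually gives an argument, it follows the same route: items (1)--(2) from item (2) of Th.~\ref{th:dist} together with the description \eq{Jq0} of $J^+(q_0)$, item (4) from Th.~\ref{th:optJ}, item (5) from Th.~\ref{th:sym}, and the remaining items by direct computation --- this is exactly the paper's proof, which moreover dismisses (3) and (6)--(8) as obvious. The one place where you add genuine content is the Whitney verification in item (7), and there your argument is sound in substance but imprecise in one step: Whitney condition (b) at $q_0$ quantifies over \emph{all} sequences $p_n \to q_0$ in a $2$-stratum (and, at ray points, over varying base points $q_n$ on the ray as well, not only secants from the fixed point), and invariance under the dilations $e^{sY}$ does not reduce this to a single orbit $p_n = e^{s_n Y}(p_0)$: a general sequence need not follow an orbit, and the dilations degenerate as $s_n \to -\infty$, so limits of tangent planes and secant lines cannot simply be transported by them (unlike your reduction along the rays via the maps $e^{rX_0}$, which are fixed diffeomorphisms preserving the stratification and hence do legitimately carry condition (b) from one ray point to another). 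The slip is harmless because your pointwise computation works verbatim for arbitrary sequences: on the graph $x=\sqrt{y^2+4|z|}$ the tangent plane at $p_n=(x_n,y_n,z_n)$ is spanned by $(y_n/x_n,1,0)$ and $(1,0,\pm x_n/2)$, so every limit of tangent planes at $q_0$ is the $(x,y)$-plane, while $|z_n|/x_n \le x_n/4 \to 0$ forces every limit secant direction into that same plane; similarly, the expansion $x_n - s_n = (y_n - s_n) + 2|z_n|/t_0 + o(|z_n|)$ handles secants $p_n - q_n$ with $q_n=(s_n,s_n,0)$ moving along a ray. With that step restated for arbitrary sequences, your proof is complete and is in fact more detailed than the paper's, which offers no argument at all for item (7).
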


The Heisenberg beak $S(0) = \partial \A$ is plotted in Figs. \ref{fig:beak}--\ref{fig:beakz} as a graph of the function $x = \sqrt{y^2 + 4|z|}$ by virtue of \eq{Jq0}, and in  Fig. \ref{fig:beak1} as a parametrized surface by virtue of \eq{S0par1}  with $q_{\pm} = (2, 0, \pm 1)$. 

\figout{
\onefiglabelsizen
{Heis_beak1}{The Heisenberg beak $\partial \A$}{fig:beak1}{8}
}

\begin{proof}
$(1)$, $(2)$
follow from item (2) of Th. \ref{th:dist} and item (3) of Sec. \ref{sec:groch}.

\medskip
$(3)$ and $(6)$--$(8)$ are obvious.

\medskip
$(4)$ follows from Th. \ref{th:optJ}.

\medskip
$(5)$ follows from Th. \ref{th:sym}.
\end{proof}

Lightlike maximizers filling $S(0)$ are shown in Fig. \ref{fig:S0opt}.
\SL spheres or radii 0, 1, 2, 3 are shown in Fig. \ref{fig:S0123}.

\figout{
\twofiglabelsizeh
{S0opt}{Lightlike maximizers filling $S(0)$}{fig:S0opt}{6}
{S0123}{\SL spheres or radii 0, 1, 2, 3}{fig:S0123}{5}
}

\begin{remark}
The spheres
\begin{align*}
&S(1) = \left\{(x, y, z) \in M \mid x = \sqrt{y^2 + f(z)}, \ y, z \in \R\right\}, \\
&S(0) = \left\{(x, y, z) \in M \mid x = \sqrt{y^2 + 4|z|}, \ y, z \in \R\right\}
\end{align*}
tend one to another as $z \to \infty$ since for any $y \in \R$
$$
\lim_{z \to \infty} \left(\sqrt{y^2 + f(z)} - \sqrt{y^2 + 4|z|}\right) = 0
$$
by virtue of \eq{fas}. The same holds for any spheres $S(R_1)$, $S(R_2)$, $R_i \in [0, + \infty)$. 
\end{remark}

\section{Conclusion}
The results obtained in this paper for the \sL problem on the Heisenberg group differ drastically from the known results for the sub-Riemannian problem on the same group:
\begin{enumerate}
\item
The \sL problem is not completely controllable.
\item
Filippov's existence theorem for optimal controls cannot be immediately applied to the \sL problem.
\item
In the \sL problem all  extremal trajectories are infinitely optimal, thus the cut locus and the conjugate locus for them are empty.
\item
The \sL length maximizers coming to the zero-radius sphere are nonsmooth (concatenations of two smooth arcs forming a corner, nonstrictly normal extremal trajectories).
\item
\SL spheres and \sL distance are real-analytic if $d > 0$. 
\end{enumerate}
It would be interesting to understand which of these properties persist for more general \sL problems (e.g., for left-invariant problems on Carnot groups).

\bigskip
The authors thank A.A.Agrachev, L.V.Lokutsievskiy, and M. Grochowski for valuable discussions of the problem considered.

\addcontentsline{toc}{section}{List of figures}
\listoffigures


\end{document}